\documentclass[12pt,oneside,reqno]{amsart}

\usepackage{amsmath,amssymb,amsthm,textcomp}
\usepackage{dsfont}
\usepackage{amsfonts,graphicx}
\usepackage[mathscr]{eucal}
\usepackage{color}
\usepackage{csquotes}
\usepackage{relsize}
\usepackage{hyperref}
\usepackage[backend=bibtex,%
firstinits=true,%
doi=false,%
isbn=true,%
url=false,%
maxnames=99]{biblatex}%

\usepackage{pgfplots}
\pgfplotsset{compat=1.18}

\AtEveryBibitem{\clearfield{issn}}
\AtEveryCitekey{\clearfield{issn}}
\numberwithin{equation}{section}
\DeclareNameAlias{sortname}{last-first}
\theoremstyle{definition}
\usepackage{mathtools}
\numberwithin{equation}{section}

\newcommand{\ncom}{\newcommand}

\ncom{\beq}{\begin{equation}}
	\ncom{\eeq}{\end{equation}}
\ncom{\bea}{\begin{eqnarray*}}
	\ncom{\eea}{\end{eqnarray*}}
\ncom{\beqa}{\begin{eqnarray}}
	\ncom{\eeqa}{\end{eqnarray}}
\ncom{\nno}{\nonumber}
\ncom{\non}{\nonumber}
\ncom{\ds}{\displaystyle}
\ncom{\half}{\frac{1}{2}}
\ncom{\mbx}{\makebox{.25cm}}
\ncom{\hs}{\mbox{\hspace{.25cm}}}
\ncom{\rar}{\rightarrow}
\ncom{\Rar}{\Rightarrow}
\ncom{\noin}{\noindent}
\ncom{\bc}{\begin{center}}
	\ncom{\ec}{\end{center}}
\ncom{\sz}{\scriptsize}
\ncom{\rf}{\ref}
\ncom{\s}{\sqrt{2}}
\ncom{\sgm}{\sigma}
\ncom{\Sgm}{\Sigma}
\ncom{\psgm}{\sigma^{\prime}}
\ncom{\dt}{\delta}
\ncom{\Dt}{\Delta}
\ncom{\lmd}{\lambda}
\ncom{\Lmd}{\Lambda}
\ncom{\Th}{\Theta}
\ncom{\e}{\eta}
\ncom{\eps}{\epsilon}
\ncom{\pcc}{\stackrel{P}{>}}
\ncom{\lp}{\stackrel{L_{p}}{>}}
\ncom{\dist}{{\rm\,dist}}
\ncom{\sspan}{{\rm\,span}}
\ncom{\re}{{\rm Re\,}}
\ncom{\im}{{\rm Im\,}}
\ncom{\sgn}{{\rm sgn\,}}
\ncom{\ba}{\begin{array}}
	\ncom{\ea}{\end{array}}
\ncom{\hone}{\mbox{\hspace{1em}}}
\ncom{\htwo}{\mbox{\hspace{2em}}}
\ncom{\hthree}{\mbox{\hspace{3em}}}
\ncom{\hfour}{\mbox{\hspace{4em}}}
\ncom{\vone}{\vskip 2ex}
\ncom{\vtwo}{\vskip 4ex}
\ncom{\vonee}{\vskip 1.5ex}
\ncom{\vthree}{\vskip 6ex}
\ncom{\vfour}{\vspace*{8ex}}
\ncom{\norm}{\|\;\;\|}
\ncom{\integ}[4]{\int_{#1}^{#2}\,{#3}\,d{#4}}
\ncom{\vspan}[1]{{{\rm\,span}\{ #1 \}}}
\ncom{\dm}[1]{ {\displaystyle{#1} } }
\ncom{\ri}[1]{{#1} \index{#1}}

\newtheorem{theorem}{\bf Theorem}[section]
\newtheorem{remark}{\bf Remark}[section]
\newtheorem{proposition}{Proposition}[section]
\newtheorem{lemma}{Lemma}[section]

\newtheoremstyle
{remarkstyle}
{}
{11pt}
{}
{}
{\bfseries}
{:}
{     }
{\thmname{#1} \thmnumber{#2} }

\theoremstyle{remarkstyle}

\def\eps{\varepsilon}

\begin{document}
	\title{On a Telegraph Process with Generalized Mittag-Leffler Waiting times and Velocity Driven by Random trials}
		\author[Rohini Bhagwanrao Pote]{Rohini Bhagwanrao Pote}
		\address{Rohini Bhagwanrao pote, Department of Mathematics, Indian Institute of Technology Bhilai, Durg 491002, India.}
		\email{rohinib@iitbhilai.ac.in}
		\author[Kuldeep Kumar Kataria]{Kuldeep Kumar Kataria}
		\address{Kuldeep Kumar Kataria, Department of Mathematics, Indian Institute of Technology Bhilai, Durg 491002, India.}
		\email{kuldeepk@iitbhilai.ac.in}
		\subjclass[2020]{Primary: 60K15; Secondary: 33E12}
		\keywords{telegraph process; generalized Mittag-Leffler function; Bernoulli random trials; {\it P\'olya urn model}.}
	\date{\today}
	
	\begin{abstract}
		We study a generalized telegraph process in which the velocity is governed by random trials by considering the specific distribution of waiting times. In this telegraph process, a particle moving on real line may change its direction whenever there is an arrival in a counting process. This direction change is driven by the outcomes of random trials. In the first case, random trials are independent and identically distributed, and waiting times have Mittag-Leffler distribution. In the second case, random trials follow {\it P\'olya urn scheme} and the first waiting time is generalized Mittag-Leffler distributed whereas other waiting times have Mittag-Leffler distribution. In both cases, we obtain the discrete component of their probability law. Also, absolutely continuous components of their conditional probability law given initial velocity are derived. The plots of absolutely continuous components of their probability law are compared for different parameters. Conditional on the initial velocity, the distributions of $n$th event time of counting processes associated with these telegraph processes are obtained.
	\end{abstract}
	\maketitle 
	
	\section{Introduction}\label{SecIntro}
	A continuous time stochastic process that models motion of a particle moving on real line with constant speed whose direction is reversed at Poissonian events is known as the telegraph process. Its transition density satisfies a second-order hyperbolic telegraph equation (see Kac (1974)). In the last few decades, many authors have studied the telegraph process and its generalizations, for example, motions with reflecting and absorbing barriers driven by the telegraph equation (see Orsingher (1995)), properties of the telegrapher's random process with or without a trap (see Foong and Kanno (1994)), telegraph process with velocities alternating at Erlang-distributed random times (see Di Crescenzo (2001)), telegraph process with gamma-distributed alternating velocities in biological modeling (see Di Crescenzo and Martinucci (2007)), telegraph random evolution on a circle (see De Gregorio and Iafrate (2021)), {\it etc}. Recently, Di Crescenzo {\it et al.} (2026) studied a generalized telegraph process with resetting to the origin driven by Bernoulli trials.
	
	Several fractional versions of the telegraph process are introduced and studied in literature. These are obtained by replacing the standard derivative in telegraph equation by a suitable fractional derivative with respect to space or time variable. Some related works in this direction are as follows: the time-fractional telegraph equations and telegraph processes with Brownian time (see Orsingher and Beghin (2004)), the space-fractional telegraph equation and related fractional telegraph process (see Orsingher and Zhao (2003)), time-fractional telegraph equation with $\psi$-Hilfer derivatives (see Vieira {\it et al.} (2022)), {\it etc}. Few other generalizations of telegraph process are as follows: a random telegraph signal of Mittag-Leffler type (see Ferraro {\it et al.} (2009)), the telegraph process stopped at stable-distributed times and its connection with fractional telegraph equation (see Beghin and Orsingher (2003)), {\it etc}. Masoliver (2016) relates the fractional telegraph equation to anomalous transport processes with non-exponential waiting time distribution. Study of these processes involves fractional derivatives and special funcions such as Caputo fractional derivative, Mittag-Leffler function, {\it etc}. 
	Such processes are of non-Markovian nature and exhibits long-range behavior. 
	
	A telegraph process in which the particle's velocity is chosen randomly at Poissonian events is studied by Stadje and Zacks (2004). Di Crescenzo and Martinucci (2013) considered the generalized telegraph process with deterministic jumps. Later, this study is extended by considering the velocity changes driven by alternating fractional Poisson process (see Di Crescenzo and Meoli (2018)). A generalized telegraph process with velocity driven by random trials is introduced and studied by Crimaldi {\it et al.} (2013). In this telegraph process, the particle moves at constant speed and whenever there is an occurrence of an event in counting process it may change its direction. This direction change is governed by random trials which are divided into two cases, namely, the {\it Bernoulli scheme} in which random trials are Bernoulli independent and identically distributed (iid) and the {\it P\'olya urn scheme} where outcome of the next trial is based on previous outcomes. Further, Crimaldi {\it et al.} (2013) studied this model for exponentially distributed and gamma distributed waiting times. For the case of exponentially distributed waiting times, Macci (2016) proved a large deviation principle. 
	
	\subsection{Generalized telegraph process with velocity driven by random trials}\label{SecStochModel}
	Crimaldi {\it et al.} (2013) introduced a generalized telegraph process with velocity driven by random trials. It is a telegraph process in which the particle moves on the real line with one of the two possible constant velocities that depends on its direction. A change in particle's direction is governed by the outcomes of random trials which are performed at event times of a counting process.
	
	Mathematically, this telegraph process is described as follows:
	
	Let $X(t)$ be the position of a particle moving on the real line and $V(t)$ be its velocity at time $t\geq0$. Let $v_1$ and $v_2$ be two positive constants such that the particle moves with velocity $v_{1}$ in the positive direction and with velocity $-v_{2}$ in the negative direction. Let $T_n$ be the $n$th event time of a counting process $\{\mathcal{N}(t)\}_{t\geq0}$. Then, $\mathcal{N}(t)\coloneqq\max\{n\in\mathbb{N}_{0}:T_n \leq t\}$, $t\geq0$. It is assumed that the particle starts moving at time $T_0 =0$ from the  origin, that is, $X(0)=0$. At time $T_0 =0$, a random trial is performed whose outcome $Y_1$ determines the initial velocity of the particle. At the first event time $T_1$, the second random trial is performed whose outcome $Y_2$ determines the future velocity of the particle starting from $T_1$. So, for $n\in\mathbb{N}_{0}$, at event time $T_n$, the $(n+1)$th random trial is performed whose outcome $Y_{n+1}$ decides the velocity of the particle till the next event time $T_{n+1}$. 
	The duration of the particle's $n$th interval of motion in the positive and negative directions are denoted by $R_n$ and $L_n$, respectively. Let $S_n$ be the velocity of the particle during the time interval $[T_n, T_{n+1})$ which is determined by the sequence $\{Y_n\}_{n\in\mathbb{N}}$ of random trials. Here, $\{R_n\}_{n\in\mathbb{N}}$, $\{L_n\}_{n\in\mathbb{N}}$ and $\{Y_n\}_{n\in\mathbb{N}}$ are assumed to be mutually independent. 
	
	\begin{figure}
		\centering
		\includegraphics[width=1\textwidth]{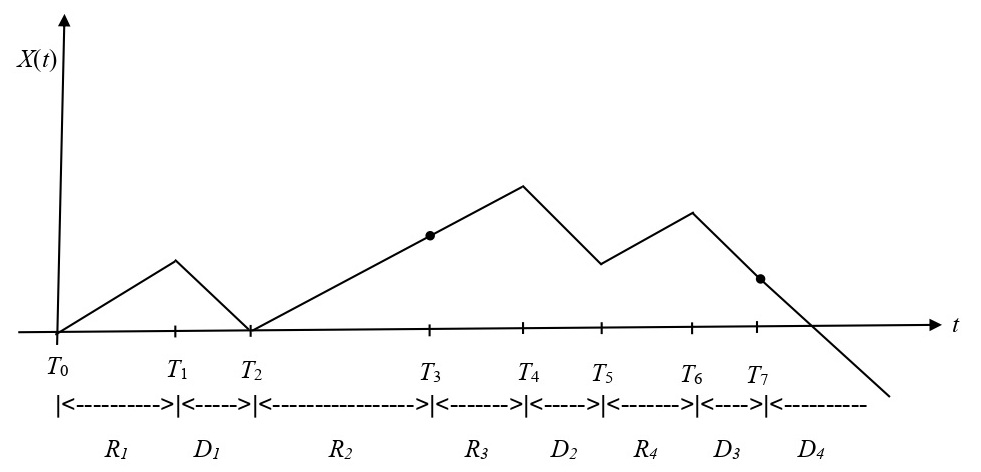}
		\caption{Sample paths of $X(t)$ for $V_{0}=v_1$.}
		\label{fig1} 
	\end{figure}
	
	Let $\mathscr{F}_0 =\{\phi, \Omega\}$ and $\mathscr{F}_m=\sigma(Y_1, Y_2, \dots, Y_m)$, $m\in\mathbb{N}$. The sequence of random trials $\{Y_n\}_{n\in\mathbb{N}}$ is described as follows:
	\begin{align*}
		\mathrm{Pr}\{S_0 =v_1|\mathscr{F}_{0}\}&=\mathrm{Pr}\{Y_1 =1|\mathscr{F}_{0}\}=\frac{b}{b+r},\\\nonumber
		\mathrm{Pr}\{S_0 =-v_2|\mathscr{F}_{0}\}&=\mathrm{Pr}\{Y_1 =0|\mathscr{F}_{0}\}=\frac{r}{b+r},
	\end{align*}
	and, for $m\in\mathbb{N}$,
	\begin{align*}
		\mathrm{Pr}\{S_m =v_1|\mathscr{F}_m\}&=\mathrm{Pr}\{Y_{m+1} =1|\mathscr{F}_m\}=\frac{b+c\sum_{i=1}^{m}Y_i}{b+r+cm},\\\nonumber
		\mathrm{Pr}\{S_{m} =-v_2|\mathscr{F}_m\}&=\mathrm{Pr}\{Y_{m+1} =0|\mathscr{F}_m\}=\frac{r+c\sum_{i=1}^{m}(1-Y_i)}{b+r+cm},
	\end{align*}
	where $b$ and $r$ are positive constants and $c$ is a non-negative constant. 
	
	The following two cases are considered:
	
	Case I. For $c=0$, $\{Y_n\}_{n\in\mathbb{N}}$ is a sequence of independent Bernoulli distributed random variables with success probability $p={b}/{(b+r)}$, that is, $Y_n$'s are iid. This case is referred as {\it Bernoulli scheme}.
	
	Case II. For $c>0$, $Y_n$'s are no longer iid. In case $b$, $r$ and $c$ are integer valued, $\{Y_n\}_{n\in\mathbb{N}}$ can be viewed as the {\it classical P\'olya urn scheme}.
	
	The particle's position at time $t$ is given by $X(t)=\int_{0}^{t}V(s)\mathrm{d}s$, where $V(t)=S_{\mathcal{N}(t)}$ is its velocity.
	
	We denote $M_{n-1}$ as the random variable that counts the number of random trials determining the velocity $v_1$ starting from the second till the $n$th trial. Thus, we have
	\begin{equation*}
		M_{n-1}=\sum_{i=1}^{n-1}\mathds{1}_{\{S_i =v_1\}}=\sum_{i=1}^{n-1}Y_{i+1}=\sum_{j=2}^{n}Y_j,\,n\geq2.
	\end{equation*}
	Also, let $\{Z_{n}\}_{n\in\mathbb{N}_{0}}$ be a sequence of random variables defined as follows:
	\begin{equation*}
		Z_0=
		\begin{cases}
			v_1 R_1\, &\text{if } S_0 =v_1,\\
			-v_2 L_1 \, &\text{if } S_0 =-v_2,
		\end{cases}
	\end{equation*}
	and for $n\in\mathbb{N}$ and $1\leq i\leq n+1$
	\begin{equation*}
		Z_n=
		\begin{cases}
			v_1 R_i \, &\text{if } S_n =v_1\, \text{and}\, Y_1 + M_{n-1}=i-1,\\
			-v_2 L_i \, &\text{if } S_n =-v_2\, \text{and}\, Y_1 + M_{n-1}=n-i+1.
		\end{cases}
	\end{equation*}
	That is, $Z_n$ is the signed distance traveled by the particle during the interval $[T_{n},T_{n+1})$, $n\in\mathbb{N}_{0}$. So, the evolution of the process is described by the following stochastic equation:
	\begin{equation*}
		X(T_{n+1})=X(T_n)+Z_n,\, n\in\mathbb{N}_0.
	\end{equation*}
	
	\subsection{Potential applications}
	A Brownian motion driven by a generalized telegraph process is used to model and forecast ground displacements and their changing tendency during the Campi Flegrei inflation-deflation episodes (see Travaglino {\it et al.} (2018)). Telegraph processes have applications in finance and market modeling. A thorough analysis of the telegraph process and its applications to option pricing is given in Kolesnik and Ratanov (2013).  A new generalization of telegraph process with variable velocities and jumps is studied, and applied to market modeling (see Ratanov (2015)). 
	
	In standard telegraph process, the velocity of a particle changes according to Poissonian arrival. But in real life scenarios, an arrival of Poisson event may not immediately change the direction of particle. A generalized telegraph process with velocity driven by random trials is potentially applicable in such situations. Here, we give two examples as follows:
	
	I. Let $X(t)$ be the stock price of a company at time $t$. The stock price may increase or decrease based on certain events which includes new product arrival, changes in management, mergers, policy decisions, increase in profit, pandemic {\it etc}. Let $\{\mathcal{N}(t)\}_{t\geq0}$ be a counting process which counts such random events. The occurrence of  random event does not reverse the stock price trend immediately. For example, if there are management changes in the company then stock price may increase with probability $p$ or decrease with probability $1-p$. That is, whenever an event occurs in $\{\mathcal{N}(t)\}_{t\geq0}$, a random trial is performed whose output is denoted by $Y_n$ which determines that the stock price will increase or decrease.
	
	
	II. The one dimensional motion of {\it E. Coli} bacteria can be studied by using generalized telegraph process (see Berg (2004), Angelani and Garra (2019)). Let $X(t)$ be the position of {\it E. Coli} bacteria at time $t$. Its interruption by random orientation, known as tumble, is counted by $\{\mathcal{N}(t)\}_{t\geq0}$. Immediately after the tumble occurs {\it E. Coli} bacteria may change its direction and that depends on local chemical concentration and recent chemical history. So, it tends to move in a direction when its recent chemical history suggests that the attractant concentration is increasing in this direction. Thus, this motion of {\it E. Coli} bacteria can be studied by {\it P\'olya urn scheme} of a generalized telegraph process with velocity driven by random trials.
	
	In this paper, we extend the work of Crimaldi {\it et al.} (2013) by considering the Mittag-Leffler and generalized Mittag-Leffler distributed waiting times for {\it Bernoulli scheme} and {\it P\'olya urn scheme}, respectively. As the Mittag-Leffler distribution is heavy tailed, its inclusion in telegraph process makes the resulting process more effective for modeling real world phenomena that generally exhibit long-range dependence.
	
	The paper is organized as follows:

	In Section \ref{SecPrelim}, we give some details on Mittag-Leffler function and its generalizations.
	In Section \ref{SecMLWaitTime}, we study the {\it Bernoulli scheme} of generalized telegraph process by considering $R_n$ and $L_n$ to be Mittag-Leffler distributed. We call this process as the telegraph process with Mittag-Leffler waiting times and velocity driven by iid random trials. The distribution of the sum of independent Mittag-Leffler random variables is derived. For this telegraph process, we obtain the explicit forms of discrete component of probability law and absolutely continuous component of conditional probability law given initial velocity. Moreover, conditional on the initial velocity, expectation of $n$th event time of counting process associated with this telegraph process is obtained. Some plots of absolutely continuous component of its probability law are compared for different sets of parameters.
	In Section \ref{SecGMLWaitTime}, we discuss the {\it P\'olya urn scheme} of generalized telegraph process by considering $R_1$ and $L_1$ to be generalized Mittag-Leffler distributed whereas $R_n$ and $L_n$ to be Mittag-Leffler distributed for all $n\geq2$. The distribution of the sum of independent generalized Mittag-Leffler random variable and Mittag-Leffler random variables is obtained. For this telegraph process, we derive the discrete component of probability law and absolutely continuous component of conditional probability law given initial velocity. Also, conditional on the initial velocity, expectation of $n$th event time of counting process associated with this telegraph process is obtained. Some plots of the absolutely continuous component of its probability law are compared for different parameters.
	
	\section{Preliminaries}\label{SecPrelim}
	First, we set some notations as follows: For any absolutely continuous random variable $X$, let $f_{X}(t)$, $F_{X}(t)$ and $\bar{F}_{X}(t)$ denote its probability density function (pdf), cumulative distribution function (cdf) and tail distribution function, respectively. Also, let $f_Y^{(n)}(t)$, $F_Y^{(n)}(t)$ and $\bar{F}_Y^{(n)}(t)$ be the pdf, cdf and tail distribution function of the convolution of independent random variables $Y_i$, $1 \leq i \leq n$, that is, $Y = Y_1 + Y_2 + \dots + Y_n$. 
	

    Next, we give few definitions and known results.
	
	The three-parameter Mittag-Leffler function which is also known as the Prabhakar function is defined as follows (see Kilbas {\it et al.} (2006)):
	\begin{equation}\label{Mittag12}
		E_{\alpha,\beta}^{\gamma}(t)\coloneqq\sum_{k=0}^{\infty}\frac{(\gamma)_k t^{k}}{k!\Gamma(\alpha k+\beta)},\,t\in\mathbb{R},
	\end{equation}
	where $\alpha>0$, $\beta>0$ and $\gamma>0$. 
	Its Laplace transform is given by
	\begin{equation}\label{LTMittag}
		\mathbb{L}(t^{\beta-1}E_{\alpha,\beta}^{\gamma}(\omega t^{\alpha}))(z)=\frac{z^{\alpha\gamma-\beta}}{(z^{\alpha}-\omega)^{\gamma}},\, \omega\in\mathbb{R},\,z>|\omega|^{1/\alpha},\,t>0.
	\end{equation}
	Here, $\mathbb{R}$ is the set of real numbers and $(\gamma)_k$ denotes the Pochhammer's symbol, that is, $(\gamma)_k={\Gamma(\gamma+k)}/{\Gamma(\gamma)}$. For $\gamma=1$, \eqref{Mittag12} reduces to the two-parameter Mittag-Leffler function denoted by $E_{\alpha,\beta}(\cdot)$. Further, for $\gamma=\beta=1$, we get the one-parameter Mittag-Leffler function. It is denoted by $E_{\alpha}(\cdot)$.

	
	A bi-variate version of the Mittag-Leffler function is defined as follows (for its multivariate version, we refer to Saxena {\it et al.} (2011)):
		\begin{equation}\label{MultGenML}
				E_{(\alpha_1, \alpha_2),\beta}^{(\gamma_1,\gamma_2)}(t_1, t_2)\coloneqq\sum_{k=0}^{\infty}\sum_{l=0}^{\infty}\frac{(\gamma_1)_{k}(\gamma_2)_{l}t_{1}^{k}t_{2}^{l}}{\Gamma(k \alpha_1+l \alpha_2 + \beta)k!l!}.
			\end{equation}
where $\beta \in\mathbb{R}$, $\gamma_i \in\mathbb{R}$ and $\alpha_i >0$, $i=1,2$.
	We call it the bi-variate generalized Mittag-Leffler function.

	Let $X$ be a random variable whose Laplace transform is given by $\mathbb{E}(e^{-zX})=(\lambda/(z^\alpha +\lambda))^{\delta}$, $0<\alpha\leq1$, $\delta>0$ and $\lambda>0$.
    Its pdf is given by
	\begin{equation}\label{GenMLDist}
		f_{X}(t)=\lambda^{\delta}t^{\alpha\delta-1}E_{\alpha,\alpha\delta}^{\delta}(-\lambda t^\alpha),\,t\geq0.
	\end{equation}
	It is known as the generalized Mittag-Leffler random variable (see Jose {\it et al.} (2010)).
	For $\delta=1$, it reduces to the Mittag-Leffler distribution.
	
	
	
	The generalized Wright function also known as the Fox-Wright function is defined as follows (see Kilbas {\it et al.} (2006)):
	\begin{equation}\label{DefFoxWright}
		{}_p\Psi_q\!\left[
		\begin{array}{c}
			(a_1,A_1)\dots(a_p,A_p)\\
			(b_1,B_1)\dots(b_q,B_q)
		\end{array}
		\,\middle|\, t
		\right]
		=
		\sum_{k=0}^{\infty}
		\frac{\prod_{i=1}^{p}\Gamma(a_i+A_i k)}
		{\prod_{j=1}^{q}\Gamma(b_j+B_j k)}
		\frac{t^k}{k!},\,t\in\mathbb{R},
	\end{equation}
	where $a_i$'s, $b_j$'s, $A_i$'s and $B_j$'s are real numbers.
	
	\section{Telegraph process with Mittag-Leffler waiting times and velocity driven by iid random trials}\label{SecMLWaitTime}
	
	Recall the generalized telegraph process from Section \ref{SecStochModel}. Here, for $c=0$, that is, the case of iid random trials, we consider all $R_n$'s and $L_n$'s to be Mittag-Leffler distributed. In other words, we study Case I: {\it Bernoulli scheme} by considering the generalized telegraph process with Mittag-Leffler waiting times. These are heavy tailed distributions that introduces memory effects in the model. 
	
	Let the distribution of $R_{n}$ and $L_{n}$ be given by $\bar{F}_{R_n}(t)=1-\mathrm{Pr}\{R_{n}\leq t\}=E_{\alpha}(-n\lambda t^{\alpha})$ and
	$\bar{F}_{L_n}(t)=1-\mathrm{Pr}\{L_{n}\leq t\}=E_{\alpha}(-n\mu t^{\alpha})$, $t\geq0$, respectively. Here, $\lambda$ and $\mu$ are positive constants, and $0<\alpha\leq1$. So, their pdfs are given by ${f}_{R_n}(t)=n\lambda t^{\alpha-1} E_{\alpha,\alpha}(-n\lambda t^{\alpha})$ and
	${f}_{L_n}(t)=n\mu t^{\alpha-1}E_{\alpha,\alpha}(-n\mu t^{\alpha})$, $t\geq0$, respectively (see Kataria and Vellaisamy (2019)).
	
	We call this process as the telegraph process with Mittag-Leffler waiting times and velocity driven by  iid random trials, and denote it by $\{(X_{\alpha}(t),V_{\alpha}(t))\}_{t\geq0}$. 
	
	The following result will be used. Its proof follows similar lines to that of Theorem 2.2 of Kataria and Vellaisamy (2019).
	
	\begin{proposition}\label{Xpdf}
		Let $\{X_i\}_{1\leq i\leq n}$ be a sequence of independent random variables with distribution $\bar{F}_{X_{i}}(t)=E_{\alpha}(-\lambda_{i}t^\alpha)$. Then, the pdf and cdf of $X=X_{1}+X_{2}+\dots+X_{n}$ are given by
		\begin{equation*}
			f_{X}(t)=t^{\alpha-1}\sum_{i=1}^{n}{\lambda_{i}E_{\alpha,\alpha}(-\lambda_{i}t^\alpha)}\underset{ k\neq i}{\prod_{k=1}^{n}}\frac{\lambda_{k}}{\lambda_{k}-\lambda_{i}}
		\end{equation*}
		and 
		\begin{equation*}
			F_{X}(t)=1-\sum_{i=1}^{n}E_{\alpha}(-\lambda_{i}t^\alpha)\prod_{\substack{k=1 \\ k \neq i}}^{n}\frac{\lambda_{k}}{\lambda_{k}-\lambda_i},
		\end{equation*}
		respectively.
	\end{proposition}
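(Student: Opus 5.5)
We argue through Laplace transforms, assuming (as the product formula implicitly requires) that the $\lambda_i$ are pairwise distinct.

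\textbf{Step 1: transform of each summand.} Since $\bar F_{X_i}(t)=E_\alpha(-\lambda_i t^\alpha)$, the density is $f_{X_i}(t)=\lambda_i t^{\alpha-1}E_{\alpha,\alpha}(-\lambda_i t^\alpha)$. Taking $\gamma=1$ and $\beta=\alpha$ in \eqref{LTMittag} gives $\mathbb{E}(e^{-zX_i})=\lambda_i/(z^\alpha+\lambda_i)$; this is also the case $\delta=1$ of \eqref{GenMLDist}. By independence,
\begin{equation*}
\mathbb{E}(e^{-zX})=\prod_{k=1}^{n}\frac{\lambda_k}{z^\alpha+\lambda_k}.
\end{equation*}

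\textbf{Step 2: partial fractions in $w=z^\alpha$.} Viewing the product as a rational function of $w$ with simple poles at $w=-\lambda_i$, we write
\begin{equation*}
\prod_{k=1}^{n}\frac{\lambda_k}{w+\lambda_k}=\sum_{i=1}^{n}\frac{\lambda_i}{w+\lambda_i}\prod_{k\neq i}\frac{\lambda_k}{\lambda_k-\lambda_i}.
\end{equation*}
The coefficients come from the residue at $w=-\lambda_i$: evaluating $\prod_{k\neq i}\lambda_k/(w+\lambda_k)$ there gives $\prod_{k\neq i}\lambda_k/(\lambda_k-\lambda_i)$. This is the same identity used for hypoexponential sums, as in Kataria and Vellaisamy (2019).

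\textbf{Step 3: invert the density.} Each term $\lambda_i/(z^\alpha+\lambda_i)$ is the transform of $\lambda_i t^{\alpha-1}E_{\alpha,\alpha}(-\lambda_i t^\alpha)$, again by \eqref{LTMittag}. Linearity and uniqueness of the Laplace transform then give the stated $f_X$.

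\textbf{Step 4: the cdf.} We integrate termwise, using $\frac{d}{dt}E_\alpha(-\lambda t^\alpha)=-\lambda t^{\alpha-1}E_{\alpha,\alpha}(-\lambda t^\alpha)$, which can be checked from the series \eqref{Mittag12}. This yields
\begin{equation*}
F_X(t)=\sum_i c_i\bigl(1-E_\alpha(-\lambda_i t^\alpha)\bigr),\qquad c_i=\prod_{k\neq i}\frac{\lambda_k}{\lambda_k-\lambda_i}.
\end{equation*}
It remains to show $\sum_i c_i=1$, which follows by setting $w=0$ in the partial fraction identity.

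\textbf{Main obstacle.} The only delicate point is justifying the inversion and uniqueness. It suffices to note that both sides are transforms of locally integrable functions for $z>0$, with $z^\alpha>0$ so that no pole is hit. The partial fraction identity itself is routine, so this step should not require much more than that remark.
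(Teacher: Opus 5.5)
Your proof is correct and follows the paper's route: the Laplace transform of the product, partial fractions in $z^\alpha$, inversion via \eqref{LTMittag}, then termwise integration and the identity $\sum_i\prod_{k\neq i}\lambda_k/(\lambda_k-\lambda_i)=1$. The differences are cosmetic. You integrate using $\frac{d}{dt}E_\alpha(-\lambda t^\alpha)=-\lambda t^{\alpha-1}E_{\alpha,\alpha}(-\lambda t^\alpha)$, where the paper passes through $t^\alpha E_{\alpha,\alpha+1}$. You also justify the coefficient-sum identity by setting $w=0$ and state the distinctness of the $\lambda_i$ explicitly, whereas the paper only asserts the identity and assumes distinctness implicitly.
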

	\begin{proof}
		As $X_{i}$'s are independent, the Laplace transform of pdf of $X$ is given by
		\begin{align}\label{LTXpdf}
			\mathbb{E}(e^{-zX})
			&=\prod_{i=1}^{n}\frac{\lambda_{i}}{z^\alpha+\lambda_{i}}\nonumber\\
			&=\Big(\prod_{j=1}^{n}\lambda_{j}\Big)\sum_{i=1}^{n}\frac{1}{z^\alpha +\lambda_{i}} \underset{ k\neq i}{\prod_{k=1}^{n}}\frac{1}{\lambda_{k}-\lambda_{i}}.
		\end{align}
		On taking the inverse Laplace transform of \eqref{LTXpdf} and by using \eqref{LTMittag}, we get the pdf of $X$ as
		\begin{equation*}\label{XpdfHalf}
			f_{X}(t)=t^{\alpha-1}\sum_{i=1}^{n}{\lambda_{i}E_{\alpha,\alpha}(-\lambda_{i}t^\alpha)}\prod_{\substack{k=1 \\ k \neq i}}^{n}\frac{\lambda_{k}}{\lambda_{k}-\lambda_i}
		\end{equation*}
	   and its cdf as
		\begin{align*}
			F_{X}(t)
			&=\int_{0}^{t}f_{X}(s)ds\nonumber\\
			&=\sum_{i=1}^{n}\lambda_{i}\Big(\prod_{\substack{k=1 \\ k \neq i}}^{n}\frac{\lambda_{k}}{\lambda_{k}-\lambda_i}\Big)\int_{0}^{t}s^{\alpha-1}E_{\alpha,\alpha}(-\lambda_{i}s^\alpha)\mathrm{d}s\nonumber\\
			&=t^{\alpha}\sum_{i=1}^{n}{\lambda_{i}E_{\alpha,\alpha+1}(-{\lambda_{i}}t^{\alpha})}\prod_{\substack{k=1 \\ k \neq i}}^{n}\frac{\lambda_{k}}{\lambda_{k}-\lambda_i} \ \ (\text{see Gorenflo {\it et al.} (2014), Eq. (4.4.4))}\nonumber\\
			&=\sum_{i=1}^{n}({1-E_{\alpha}(-\lambda_{i}t^\alpha)})\prod_{\substack{k=1 \\ k \neq i}}^{n}\frac{\lambda_{k}}{\lambda_{k}-\lambda_i}\nonumber\\
			&=1-\sum_{i=1}^{n}E_{\alpha}(-\lambda_{i}t^\alpha)\prod_{\substack{k=1 \\ k \neq i}}^{n}\Big(\frac{\lambda_{k}}{\lambda_{k}-\lambda_i}\Big),
		\end{align*}
		where we have used $\sum_{i=1}^{n}\prod_{\substack{k=1 \\ k \neq i}}^{n}\frac{\lambda_{k}}{\lambda_{k}-\lambda_i}=1$ in the last step. This completes the proof.
	\end{proof}
	
	From Proposition \ref{Xpdf}, the pdf and cdf of $R^{(n)}=R_{1}+R_{2}+\dots+R_{n}$ are given by
	\begin{equation}\label{pdfRnhalf}
		f^{(n)}_{R}(t)=n!\lambda t^{\alpha-1}\sum_{i=1}^{n}E_{\alpha,\alpha}(-\lambda i t^{\alpha})\underset{ k\neq i}{\prod_{k=1}^{n}}\frac{1}{k-i}
	\end{equation}
	and
	\begin{equation}\label{cdfRnhalf}
		F^{(n)}_{R}(t)=1-\sum_{i=1}^{n}E_{\alpha}(-\lambda it^{\alpha})\prod_{\substack{k=1 \\ k \neq i}}^{n}\frac{k}{k-i},
	\end{equation}
	respectively.
	Note that
	\begin{equation}\label{ProdRelation}
		\underset{ k\neq i}{\prod_{k=1}^{n}}(k-i)=(-1)^{i-1}(i-1)!(n-i)!.
	\end{equation}
	By using \eqref{ProdRelation} in \eqref{pdfRnhalf} and \eqref{cdfRnhalf}, the pdf and cdf of $R^{(n)}$ reduces to
	\begin{equation}\label{pdfRn}
		f^{(n)}_{R}(t)=n\lambda t^{\alpha-1} \sum_{i=0}^{n-1}(-1)^i\binom{n-1}{i}E_{\alpha,\alpha}(-\lambda(i+1)t^{\alpha})
	\end{equation}
	and
	\begin{equation}\label{cdfRn}
		F^{(n)}_{R}(t)=\sum_{i=0}^{n}(-1)^i\binom{n}{i}E_{\alpha}(-i\lambda t^{\alpha}),
	\end{equation}
	respectively.
	
	Similarly, the pdf and cdf of $L^{(n)}=L_{1}+L_{2}+\dots+L_{n}$ are given by
	\begin{equation}\label{pdfLn}
		f^{(n)}_{L}(t)=n\mu t^{\alpha-1} \sum_{i=0}^{n-1}(-1)^i\binom{n-1}{i}E_{\alpha,\alpha}(-\mu(i+1)t^{\alpha})
	\end{equation}
	and 
	\begin{equation}\label{cdfLn}
		F^{(n)}_{L}(t)=\sum_{i=0}^{n}(-1)^i\binom{n}{i}E_{\alpha}(-i\mu t^{\alpha}),
	\end{equation}
	respectively.
	\begin{remark}
		For $\alpha=1$, the pdfs and cdfs of $R^{(n)}$ and $L^{(n)}$ reduces to 	$f_{R}^{(n)}=n\lambda e^{-\lambda t}(1-e^{-\lambda t})^{n-1}$, $f_{L}^{(n)}=n\mu e^{-\mu t}(1-e^{-\mu t})^{n-1}$,
		$F_{R}^{(n)}=(1-e^{-\lambda t})^{n}$,
		$F_{L}^{(n)}=(1-e^{-\mu t})^{n}$ 
		which agrees with Eq. (24) and Eq. (25) of Crimaldi {\it et al.} (2013). That is, for $\alpha=1$, the telegraph process with Mittag-Leffler waiting times and velocity driven by iid random trials reduces to the generalized telegraph process with velocity driven by random trials under {\it Bernoulli scheme}.
	\end{remark}

	Note that the particle is at position $v_{1}t$ or $-v_{2}t$ on the real line with positive probability if the particle does not change its direction up to time $t$. Conditional on the occurrence of at least one direction change by time $t$, the random variable $X_{\alpha}(t)$ admits a continuous distribution on the interval $(-v_2t,v_1t)$. So, $X_{\alpha}(t)$, $t\geq0$ has mixed distribution. 
	
	In the next result, we derive the discrete component of the probability law of $X_{\alpha}(t)$, $t\geq0$.
	
	\begin{theorem}\label{ThmDiscreteProb}
		The probabilities that the particle is at positions $v_{1}t$ and $-v_2t$ are
		\begin{equation*}\label{DiscProbXtv1}
			\mathrm{Pr}\{X_{\alpha}(t)=v_1 t\}=\sum_{n=0}^{\infty}p^{n+1}\sum_{k=0}^{n}(-1)^k \binom{n}{k}E_{\alpha}(-\lambda(k+1)t^{\alpha})
		\end{equation*}
		and
		\begin{equation*}\label{DiscProbXtv2}
			\mathrm{Pr}\{X_{\alpha}(t)=-v_2 t\}=\sum_{n=0}^{\infty}(1-p)^{n+1}\sum_{k=0}^{n}(-1)^k \binom{n}{k}E_{\alpha}(-\mu(k+1)t^{\alpha}),
		\end{equation*}
		respectively.
	\end{theorem}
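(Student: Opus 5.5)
The plan is to decompose the event $\{X_{\alpha}(t)=v_1t\}$ according to the number of events of $\mathcal{N}$ that have occurred by time $t$. The particle sits at $v_1t$ exactly when every velocity it has used up to time $t$ equals $v_1$. If $\mathcal{N}(t)=n$, the trials $Y_1,\dots,Y_{n+1}$ must all be successes. On that event the motion up to time $t$ uses only the positive-direction durations $R_1,\dots,R_{n+1}$: since $Y_1+M_{j-1}=j$ for every $j\le n$, the $(j+1)$th interval of motion is $R_{j+1}$. Hence $T_n=R^{(n)}$ and $T_{n+1}=R^{(n+1)}$. Intervals of negative length do not occur, so the continuous component does not interfere.

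This gives
\begin{equation*}
\mathrm{Pr}\{X_{\alpha}(t)=v_1t\}=\sum_{n=0}^{\infty}\mathrm{Pr}\{Y_1=\dots=Y_{n+1}=1\}\,\mathrm{Pr}\{R^{(n)}\le t<R^{(n+1)}\},
\end{equation*}
with the convention $R^{(0)}=0$. Here we use the mutual independence of $\{Y_n\}$ and $\{R_n\}$. For $c=0$ the trials are iid, so the first factor equals $p^{n+1}$. The second factor is $F^{(n)}_R(t)-F^{(n+1)}_R(t)$.

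The next step is to substitute \eqref{cdfRn} into this difference and simplify. By Pascal's rule $\binom{n+1}{i}-\binom{n}{i}=\binom{n}{i-1}$, so
\begin{equation*}
F^{(n)}_R(t)-F^{(n+1)}_R(t)=\sum_{i=1}^{n+1}(-1)^{i-1}\binom{n}{i-1}E_{\alpha}(-i\lambda t^{\alpha}).
\end{equation*}
Reindexing with $k=i-1$ yields $\sum_{k=0}^{n}(-1)^k\binom{n}{k}E_{\alpha}(-\lambda(k+1)t^{\alpha})$, which is exactly the claimed expression. The case $n=0$ reduces consistently to $\bar F_{R_1}(t)=E_\alpha(-\lambda t^\alpha)$.

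The formula for $-v_2t$ follows by the same argument with the failure probability $1-p$, the durations $L_i$, and \eqref{cdfLn}.

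The main point to check carefully is the first step: that on the event of all successes the relevant durations really are $R_1,\dots,R_{n+1}$. This follows from the indexing rule in the definition of $Z_n$. We must also justify exchanging the sum over $n$ with the probability, which is countable additivity over the disjoint events $\{\mathcal{N}(t)=n\}$. Finally, the path must be non-explosive, so that $\sum_n \mathrm{Pr}\{\mathcal{N}(t)=n\}=1$. This holds because $R^{(n)}\to\infty$ almost surely, as the $R_i$ are positive and $R_n$ is stochastically of order $n^{-1/\alpha}$ but the sums still increase.
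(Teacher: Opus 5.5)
Your proof is correct and follows the paper's route. The paper imports the decomposition $\mathrm{Pr}\{X_{\alpha}(t)=v_1t\}=\sum_{n\geq0}\mathrm{Pr}\{Y_1=\dots=Y_{n+1}=1\}\big(F^{(n)}_R(t)-F^{(n+1)}_R(t)\big)$ from Proposition 1 of Crimaldi \emph{et al.} (2013), which you derive directly from the indexing of the $Z_n$ and independence. It then does exactly your Pascal-rule simplification of \eqref{cdfRn}.

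The one flaw is your justification of non-explosion. Positivity of the $R_i$, together with $R_n$ being of order $n^{-1/\alpha}$ (a summable order when $\alpha<1$), does not force $R^{(n)}\to\infty$. What does force it is the heavy tail: writing $E_{\alpha}(-x)=\int_{0}^{\infty}e^{-xs}\,G(\mathrm{d}s)$ with $G$ a probability measure (by complete monotonicity) gives $F^{(n)}_R(t)=\int_{0}^{\infty}(1-e^{-\lambda t^{\alpha}s})^{n}\,G(\mathrm{d}s)\to0$. In any case, since $p<1$, the identity itself only needs $\mathrm{Pr}\{Y_1=Y_2=\cdots=1\}=0$.
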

	\begin{proof}
	From \eqref{cdfRn}, we have
		\begin{align}\label{Rncdfdiff}
			F^{(n)}_{R}(t)-F^{(n+1)}_{R}(t)
			&=\sum_{k=0}^{n}(-1)^k\binom{n}{k}E_{\alpha}(-\lambda k t^{\alpha})-\sum_{k=0}^{n+1}(-1)^k\binom{n+1}{k}E_{\alpha}(-\lambda kt^{\alpha})\nonumber\\
			&=\sum_{k=1}^{n}(-1)^{k-1} \Big(\binom{n+1}{k}-\binom{n}{k}\Big)E_{\alpha}(-\lambda kt^{\alpha})+(-1)^n E_{\alpha}(-\lambda(n+1)t^{\alpha})\nonumber\\
			&=\sum_{k=1}^{n}(-1)^{k-1}\binom{n}{k-1}E_{\alpha}(-\lambda kt^{\alpha})+(-1)^n E_{\alpha}(-\lambda(n+1)t^{\alpha})\nonumber\\
			&=\sum_{k=0}^{n}(-1)^k \binom{n}{k}E_{\alpha}(-\lambda(k+1)t^{\alpha}).
		\end{align}
	Similarly, by using \eqref{cdfLn}, we get
	\begin{equation}\label{Lncdfdiff}
			F^{(n)}_{L}(t)-F^{(n+1)}_{L}(t)= \sum_{k=0}^{n}(-1)^k \binom{n}{k}E_{\alpha}(-\mu(k+1)t^{\alpha}).
	\end{equation}
	By using \eqref{Rncdfdiff} and \eqref{Lncdfdiff} in Proposition 1 of Crimaldi {\it et al.} (2013), we get the required result.
	\end{proof}
	
	\begin{remark}
		On substituting $\alpha=1$ in Theorem \ref{ThmDiscreteProb}, we get
		\begin{align*}
			\mathrm{Pr}\{X_{1}(t)=v_1 t\}&=\sum_{n=0}^{\infty}p^{n+1}\sum_{k=0}^{n}(-1)^k \binom{n}{k}e^{-\lambda(k+1)t}\\
			&=pe^{-\lambda t}\sum_{n=0}^{\infty}p^{n}(1-e^{-\lambda t})^{n}
			=\frac{pe^{-\lambda t}}{1-p(1-e^{-\lambda t})}
		\end{align*}
		and 
		\begin{equation*}
			\mathrm{Pr}\{X_{1}(t)=-v_{2}t\}=\frac{(1-p)e^{-\mu t}}{1-(1-p)(1-e^{-\mu t})}
		\end{equation*}
		which agree with the corresponding result for generalized telegraph process with velocity driven by random trials: {\it Bernoulli scheme} (see Crimaldi {\it et al.} (2013), Proposition 3).
	\end{remark}
	
	Let $H^{\alpha}_{\beta,\eta}(a,b;t)= t^{\beta-1}E_{\alpha,\beta}(at^\alpha)*t^{\eta-1}E_{\alpha,\eta}(bt^\alpha)$, $t\geq0$ be the convolution of two-parameter Mittag-Leffler functions. So,
	\begin{equation}\label{Hfuncdef}
		H^{\alpha}_{\beta,\eta}(a,b;t)=\int_{0}^{t}s^{\beta-1}E_{\alpha,\beta}(as^\alpha)(t-s)^{\eta-1}E_{\alpha,\eta}(b(t-s)^\alpha)\mathrm{d}s,\,t\geq0.
	\end{equation}
	By using \eqref{LTMittag}, the Laplace transform of \eqref{Hfuncdef} is given by
	\begin{align}\label{HfuncLT}
		\mathbb{L}(H^{\alpha}_{\beta,\eta}(a,b;t))(z)
		&=\frac{z^{2\alpha-\beta-\eta}}{(z^\alpha-a)(z^\alpha-b)}\nonumber\\
		&=\frac{1}{a-b}\Big(\frac{az^{\alpha-\beta-\eta}}{z^\alpha-a}-\frac{bz^{\alpha-\beta-\eta}}{z^\alpha-b}\Big),\,a\neq b.
	\end{align}
	On taking the inverse Laplace transform of \eqref{HfuncLT}, and from Eq. (2.21) of Kilbas {\it et al.} (2004), we have
	\begin{align}\label{HfuncForm}
		H^{\alpha}_{\beta,\eta}(a,b;t)=
		\begin{cases}
			t^{\beta+\eta-1}E_{\alpha,\beta+\eta}^{2}(at^\alpha),\, a=b,\\[0.3em]
			\frac{t^{\beta+\eta-1}}{a-b}(aE_{\alpha,\beta+\eta}(at^\alpha)-bE_{\alpha,\beta+\eta}(bt^\alpha)), \, a\neq b.
		\end{cases}
	\end{align}
	
	The conditional probability density of $X_{\alpha}(t)$ given $V_{\alpha}(0)=v\in\{v_{1},-v_2\}$ can be described in terms of the conditional forward and backward transition pdfs as follows: $p(x,t|v)=f(x,t|v)+b(x,t|v)$ (see Crimaldi {\it et al.} (2013), p. 1116). 
	
	For $x\in\mathbb{R}$, $t\geq0$ and $v\in\{v_{1},-v_{2}\}$, the conditional forward and backward pdfs are given by
	\begin{equation*}
		f(x,t|v)=\frac{\partial}{\partial x}\mathrm{Pr}\{X_{\alpha}(t)\leq x, V_{\alpha}(t)=v_{1}| V_{\alpha}(0)=v\}
	\end{equation*}
	and 
	\begin{equation*}
		b(x,t|v)=\frac{\partial}{\partial x}\mathrm{Pr}\{X_{\alpha}(t)\leq x, V_{\alpha}(t)=-v_{2}| V_{\alpha}(0)=v\},
	\end{equation*}
	respectively.
	Also, we have
	\begin{equation}\label{fxtsum}
		f(x,t|v)=\sum_{n=1}^{\infty}f_{n}(x,t|v)
	\end{equation}
	and
	\begin{equation}\label{bxtsum}
		b(x,t|v)=\sum_{n=1}^{\infty}b_{n}(x,t|v),
	\end{equation}
	where
	\begin{equation*}
		f_{n}(x,t|v)=\frac{\partial}{\partial x}\mathrm{Pr}\{X_{\alpha}(t)\leq x, V_{\alpha}(t)=v_{1}, \mathcal{N}(t)=n| V_{\alpha}(0)=v\}
	\end{equation*}
	and
	\begin{equation*}
		b_{n}(x,t|v)=\frac{\partial}{\partial x}\mathrm{Pr}\{X_{\alpha}(t)\leq x, V_{\alpha}(t)=-v_{2}, \mathcal{N}(t)=n| V_{\alpha}(0)=v\}.
	\end{equation*}
	
	In the next result, we obtain the forward and backward densities of conditional probability law of $X_{\alpha}(t)$, $t\geq0$.
	
	\begin{theorem}\label{ThmAbsolutelyCtsProb}
		The absolutely continuous components of the conditional probability law of $X_{\alpha}(t)$, $t\geq0$ are given by its forward and backward densities as follows:
		\begin{align}\label{fxtv}
			f(x,t|v_1)
			&=\frac{\mu(t-\theta)^{\alpha-1}}{v_1 +v_2}\sum_{n=2}^{\infty}\sum_{k=0}^{n-2}\sum_{i=0}^{n-k-2}\sum_{j=0}^{k+1}(-1)^{i+j}\binom{n-1}{k}\binom{n-k-2}{i}\binom{k+1}{j}(n-k-1)\nonumber\\
			&\ \ \cdot p^{k+1}(1-p)^{n-k-1}E_{\alpha}(-\lambda(j+1)\theta^{\alpha})E_{\alpha,\alpha}(-\mu(i+1)(t-\theta)^{\alpha}),
		\end{align}
		and 
		\begin{align}\label{bxtv}
			b(x,t|v_1)
			&=\frac{\lambda\theta^{\alpha-1}}{v_1 +v_2}\sum_{n=1}^{\infty}\sum_{k=0}^{n-1}\sum_{i=0}^{k}\sum_{j=0}^{n-k-1}(-1)^{i+j}\binom{n-1}{k}\binom{k}{i}\binom{n-k-1}{j}(k+1)p^{k}(1-p)^{n-k}\nonumber\\
			&\ \ \cdot E_{\alpha}(-\mu(j+1)(t-\theta)^{\alpha})E_{\alpha,\alpha}(-\lambda(i+1)\theta^{\alpha}),\, -v_2 t< x <v_1t,
		\end{align}
		where $\theta={(v_2 t+x)}/{(v_1 + v_2)}$.
	\end{theorem}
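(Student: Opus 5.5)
The plan follows the proof of Theorem \ref{ThmDiscreteProb}: plug the convolution formulas for $R^{(n)}$ and $L^{(n)}$ into the general representation of Crimaldi {\it et al.} (2013) for the forward and backward densities. That representation is a sum over $n$ and over the number of ``positive'' trials among the first $n$ velocity choices, with binomial weights $\binom{n-1}{k}p^{k}(1-p)^{n-1-k}$ (times $p$ or $1-p$ for the initial velocity, which here is conditioned on). Each summand is a product of a density and a tail function of the partial sums $R^{(\cdot)}$ and $L^{(\cdot)}$, evaluated at $\theta$ and $t-\theta$. The relevant formula is their Proposition (the one following Proposition 1, giving $f_n$ and $b_n$).

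First, the change of variables. Given $V_\alpha(0)=v_1$, $\mathcal{N}(t)=n$, and total positive-motion time $\tau$, we have $x=v_1\tau-v_2(t-\tau)$, so $\tau=\theta$ and the Jacobian is $1/(v_1+v_2)$; this explains the prefactor. For $f_n(x,t|v_1)$ the final velocity is $v_1$. If $k+1$ positive intervals are used (the initial one plus $k$ among the $n$ later trials, the last one being positive), then the time spent in the positive direction is the incomplete sum $R^{(k+1)}$ with the last interval still running. This contributes the tail $\Pr\{R^{(k+1)}\le\theta<R^{(k+2)}\}=F^{(k+1)}_R(\theta)-F^{(k+2)}_R(\theta)$, which by \eqref{Rncdfdiff} equals $\sum_{j=0}^{k+1}(-1)^j\binom{k+1}{j}E_\alpha(-\lambda(j+1)\theta^\alpha)$. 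The negative intervals are completed, so they contribute the density $f^{(n-k-1)}_L(t-\theta)$, which by \eqref{pdfLn} equals $(n-k-1)\mu(t-\theta)^{\alpha-1}\sum_{i=0}^{n-k-2}(-1)^i\binom{n-k-2}{i}E_{\alpha,\alpha}(-\mu(i+1)(t-\theta)^\alpha)$. Multiplying by the trial probability $\binom{n-1}{k}p^{k+1}(1-p)^{n-k-1}$ and summing over $k$ and $n$ via \eqref{fxtsum} gives \eqref{fxtv}; the constraint $n-k-1\ge1$ forces $n\ge2$.

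The backward density is symmetric. The final velocity is $-v_2$, the positive intervals are completed and give $f^{(k+1)}_R(\theta)$ through \eqref{pdfRn}, which produces the factor $(k+1)\lambda\theta^{\alpha-1}$ and the sum over $i\le k$. The negative intervals are incomplete and give $F^{(n-k)}_L-F^{(n-k+1)}_L$ at $t-\theta$ through \eqref{Lncdfdiff}, which produces the sum over $j\le n-k-1$. The weight is $\binom{n-1}{k}p^k(1-p)^{n-k}$, and summing with \eqref{bxtsum} gives \eqref{bxtv}.

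The main obstacle is the bookkeeping: matching exactly which partial sums are complete and which are incomplete, and the precise indices and probability weights in Crimaldi {\it et al.}'s general formula, given the indexing conventions in the definition of $Z_n$. I would check this by setting $\alpha=1$ and comparing with their exponential-case result. Justifying the interchange of the infinite sums with differentiation in $x$ is routine, since the Mittag-Leffler functions involved are bounded by $1$ on the negative axis and the series is dominated by the probabilities $\Pr\{\mathcal{N}(t)=n\}$.
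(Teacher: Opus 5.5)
Your argument is correct and has the same skeleton as the paper's proof. Both plug the trial weights $\mathrm{Pr}\{M_{n-1}=k,S_n=\pm\cdot\mid S_0=v_1\}$ and the densities \eqref{pdfRn}, \eqref{pdfLn} of the completed partial sums into Eqs.\ (15)--(16) of Crimaldi \emph{et al.}\ (2013).

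The difference is the factor coming from the interval still running at time $t$. The paper evaluates $\int_0^\theta f^{(k+1)}_R(y)\bar F_{R_{k+2}}(\theta-y)\,\mathrm{d}y$ analytically. It expands the integral into convolutions $H^{\alpha}_{\alpha,1}$, evaluates them with \eqref{HfuncForm}, and collapses the result with $\frac{k+1}{k+1-j}\binom{k}{j}=\binom{k+1}{j}$.

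You instead note that $R_{k+2}\ge 0$ is independent of $R^{(k+1)}$, so the integral equals $\mathrm{Pr}\{R^{(k+1)}\le\theta<R^{(k+2)}\}=F^{(k+1)}_R(\theta)-F^{(k+2)}_R(\theta)$. That difference is already computed in \eqref{Rncdfdiff}. Your route is shorter and avoids the $H$-function machinery. It also explains why \eqref{IntIfn} and \eqref{IntIbn} are exactly the cdf differences from Theorem \ref{ThmDiscreteProb}. The paper's computation adds only an independent analytic check.

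Your index bookkeeping, however, is off, and in one place the error enters a formula. Given $S_0=v_1$, $M_{n-1}=k$ counts the positive velocities among $S_1,\dots,S_{n-1}$. So there are $k+1$ completed positive intervals ($S_0$ and those $k$) and $n-k-1$ completed negative ones. The extra factor $p$ or $1-p$ in the weight comes from the running velocity $S_n$, not from the initial one.

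In the forward case your formulas agree with this, even though your verbal count (``$k+1$ positive intervals \dots the last one being positive'') does not. In the backward case the running interval is $L_{n-k}$, so the factor must be $F^{(n-k-1)}_L(t-\theta)-F^{(n-k)}_L(t-\theta)$. The difference you wrote, $F^{(n-k)}_L-F^{(n-k+1)}_L$, gives by \eqref{Lncdfdiff} the sum $\sum_{j=0}^{n-k}(-1)^j\binom{n-k}{j}E_\alpha(-\mu(j+1)(t-\theta)^\alpha)$, which is not what appears in \eqref{bxtv}.

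With the corrected index you recover \eqref{IntIbn} exactly. This includes the term $k=n-1$, where $F^{(0)}_L\equiv 1$ and the factor is $\bar F_{L_1}(t-\theta)$; this yields \eqref{b1xtc} when $n=1$.

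Finally, the summation in \eqref{fxtsum}--\eqref{bxtsum} follows from the nonnegativity of each $f_n$ and $b_n$ (Tonelli). It does not follow from the Mittag-Leffler functions being bounded by $1$, since that bound does not control the alternating binomial sums.
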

	\begin{proof}
		Consider the following integral:
		{\small\begin{align}\label{IntIfn}
			\int_{t-\theta}^{t}&f^{(k+1)}_{R}(s-t+\theta)
			\bar{F}_{R_{k+2}}(t-s)\mathrm{d}s\nonumber\\
			&=\int_{0}^{\theta}f^{(k+1)}_{R}(y)
			\bar{F}_{R_{k+2}}(\theta-y)\mathrm{d}y\nonumber\\
			&=\lambda(k+1)\sum_{j=0}^{k}(-1)^{j}\binom{k}{j}\int_{0}^{\theta}y^{\alpha-1}E_{\alpha,\alpha}(-\lambda (j+1)y^{\alpha})E_{\alpha}(-\lambda(k+2)(\theta-y)^{\alpha})\mathrm{d}y\nonumber\\
			&=\lambda(k+1)\sum_{j=0}^{k}(-1)^{j}\binom{k}{j}H^{\alpha}_{\alpha,1}(-\lambda(j+1),-\lambda(k+2);\theta)\ \ (\text{by using \eqref{Hfuncdef}})\nonumber\\
			&=\lambda(k+1)\sum_{j=0}^{k}\frac{(-1)^{j}}{k+1-j}\binom{k}{j}{\theta^\alpha\big((k+2)E_{\alpha,\alpha+1}(-\lambda(k+2)\theta^\alpha)-(j+1)E_{\alpha,\alpha+1}(-\lambda(j+1)\theta^\alpha)\big)}\,\nonumber\\
			&\hspace{12cm}
			(\text{by using \eqref{HfuncForm}})\nonumber\\
			&=(k+1)\sum_{j=0}^{k}\frac{(-1)^{j}}{k+1-j}\binom{k}{j}{E_{\alpha}(-\lambda(j+1)\theta^\alpha)-E_{\alpha}(-\lambda(k+2)\theta^\alpha)\big)}\nonumber\\
			&=\sum_{j=0}^{k+1}(-1)^j\binom{k+1}{j}E_{\alpha}(-\lambda(j+1)\theta^\alpha).
		\end{align}}
	By using \eqref{IntIfn} in Eq. (15) of Crimaldi {\it et al.} (2013), we obtain
	\begin{align}\label{fnxtc}
		f_{n}(x,t|v_1)&=\frac{\mu(t-\theta)^{\alpha-1}}{v_1+v_2}\sum_{k=0}^{n-2}\binom{n-1}{k}p^{k+1}(1-p)^{n-k-1}(n-k-1)\sum_{i=0}^{n-k-2}(-1)^i\binom{n-k-2}{i}\nonumber\\
		&\ \ \cdot E_{\alpha,\alpha}(-\mu (i+1)(t-\theta)^\alpha)\sum_{j=0}^{k+1}(-1)^j\binom{k+1}{j}E_{\alpha}(-\lambda(j+1)\theta^\alpha),
	\end{align}
	where we have used following result (see Crimaldi {\it et al.} (2013), Eq. (45)):
	\begin{equation*}
		\mathrm{Pr}\{M_{n-1}=k, S_{n} =v_1|S_0 =v_1\}=\binom{n-1}{k}p^{k+1}(1-p)^{n-k-1}.
	\end{equation*}
	By using \eqref{fnxtc} in \eqref{fxtsum}, we obtain the required result \eqref{fxtv}. 
	
	Similarly,
	\begin{equation}\label{IntIbn}
		\int_{\theta}^{t}f^{(n-k-1)}_{L}(s-\theta)
		\bar{F}_{L_{n-k}}(t-s)\mathrm{d}s=\sum_{j=0}^{n-k-1}(-1)^j\binom{n-k-1}{j}E_{\alpha}(-\mu(j+1)(t-\theta)^\alpha).
	\end{equation}
	By using \eqref{IntIbn} and Eq. (16) of Crimaldi {\it et al.} (2013), we obtain
	\begin{align}\label{b1xtc}
		b_{1}(x,t|v_1)=\frac{\lambda\theta^{\alpha-1}(1-p)}{v_1 +v_2}E_{\alpha,\alpha}(-\lambda\theta^\alpha)E_{\alpha}(-\mu(t-\theta)^\alpha),
	\end{align}
	and for $n\geq2$, we get
	\begin{align}\label{bnxtc}
		b_{n}(x,t|v_1)&=\frac{\lambda\theta^{\alpha-1}}{v_1 +v_2}\sum_{k=0}^{n-1}\binom{n-1}{k}p^{k}(1-p)^{n-k}(k+1)\sum_{i=0}^{k}(-1)^{i}\binom{k}{i}E_{\alpha,\alpha}(-\lambda(i+1)\theta^\alpha)\nonumber\\
		&\ \ \cdot\sum_{j=0}^{n-k-1}(-1)^j\binom{n-k-1}{j}E_{\alpha}(-\mu(j+1)(t-\theta)^\alpha),
	\end{align}
	where we have used (see Crimaldi {\it et al.} (2013), Eq. (46))
		\begin{equation*}
		\mathrm{Pr}\{M_{n-1}=k, S_{n}=-v_2|S_{0}=v_1\}=\binom{n-1}{k}p^{k}(1-p)^{n-k}.
	\end{equation*}
	By using \eqref{b1xtc} and \eqref{bnxtc} in \eqref{bxtsum}, we obtain the required result \eqref{bxtv}.
	\end{proof}
	
	\begin{remark}
		On substituting $\alpha=1$ in \eqref{fxtv}, we get
		\begin{align*}
			f(x,t|v_1)
			&=\frac{\mu}{v_1 +v_2}\sum_{n=2}^{\infty}\sum_{k=0}^{n-2}\binom{n-1}{k}p^{k+1}(1-p)^{n-k-1}(n-k-1)\\
			&\hspace{2.8cm} \cdot \sum_{i=0}^{n-k-2}(-1)^{i}\binom{n-k-2}{i} e^{-\mu(i+1)(t-\theta)}\sum_{j=0}^{k+1}(-1)^j \binom{k+1}{j}e^{-\lambda(j+1)\theta}\\
			&=\frac{\mu p(1-p)}{v_1 +v_2}(e^{\lambda\theta}-1) e^{-\mu (t-\theta)-2\lambda\theta}\sum_{n=2}^{\infty}\sum_{k=0}^{n-2}\binom{n-1}{k}p^{k+1}(1-p)^{n-k-1}\\
			&\hspace{5.7cm} \cdot (n-k-1)(1-e^{-\mu i(t-\theta)})^{n-k-2}(1-e^{-\lambda\theta})^{k+1}\\
			&=\frac{\mu p(1-p)(e^{\lambda\theta}-1)e^{\mu(t+\theta)}}{(v_1+v_2)\big((1-p)e^{(\lambda+\mu)\theta}+pe^{\mu t}\big)^2}
		\end{align*}
		which agrees with the corresponding result for generalized telegraph process with velocity driven by random trials: {\it Bernoulli scheme} (see Crimaldi {\it et al.} (2013), Theorem 2). A similar reduction holds true on substituting $\alpha=1$ in \eqref{bxtv}.
	\end{remark}
	
	\begin{remark}
		The absolutely continuous components of the conditional probability law of $X_{\alpha}(t)$, $t\geq0$ are given by its forward and backward densities as follows:
		\begin{align*}
			f(x,t|-v_2)
			&=\frac{\mu(t-\theta)^{\alpha-1}}{v_1 +v_2}\sum_{n=1}^{\infty}\sum_{k=0}^{n-1}\sum_{i=0}^{k}\sum_{j=0}^{n-k-1}(-1)^{i+j}\binom{n-1}{k}\binom{k}{i}\binom{n-k-1}{j}(k+1)\nonumber\\
			&\ \ \cdot (1-p)^{k} p^{n-k} E_{\alpha,\alpha}(-\mu(i+1)(t-\theta)^{\alpha})E_{\alpha}(-\lambda(j+1)\theta^{\alpha})
		\end{align*}
		and
		\begin{align*}
			b(x,t|-v_2)
			&=\frac{\lambda\theta^{\alpha-1}}{v_1 +v_2}\sum_{n=2}^{\infty}\sum_{k=0}^{n-2}\sum_{i=0}^{n-k-2}\sum_{j=0}^{k+1}(-1)^{i+j}\binom{n-1}{k}\binom{n-k-2}{i}\binom{k+1}{j}(n-k-1)\nonumber\\
			&\ \ \cdot (1-p)^{k+1}p^{n-k-1}E_{\alpha,\alpha}(-\lambda(i+1)\theta^{\alpha})E_{\alpha}(-\mu(j+1)(t-\theta)^{\alpha}),\, -v_2 t< x <v_1t.
		\end{align*}
		If 	$\mathrm{Pr}\{V_{\alpha}(0)=v_1\}=\mathrm{Pr}\{V_{\alpha}(0)=-v_2\}=0.5$ then
		\begin{equation}\label{pxt1/2}
			p(x,t)=\frac{1}{2}(p(x,t|v_1)+p(x,t|-v_2)).
		\end{equation}
	\end{remark}

    \begin{figure}
    	\centering
    	\includegraphics[width=1\textwidth]{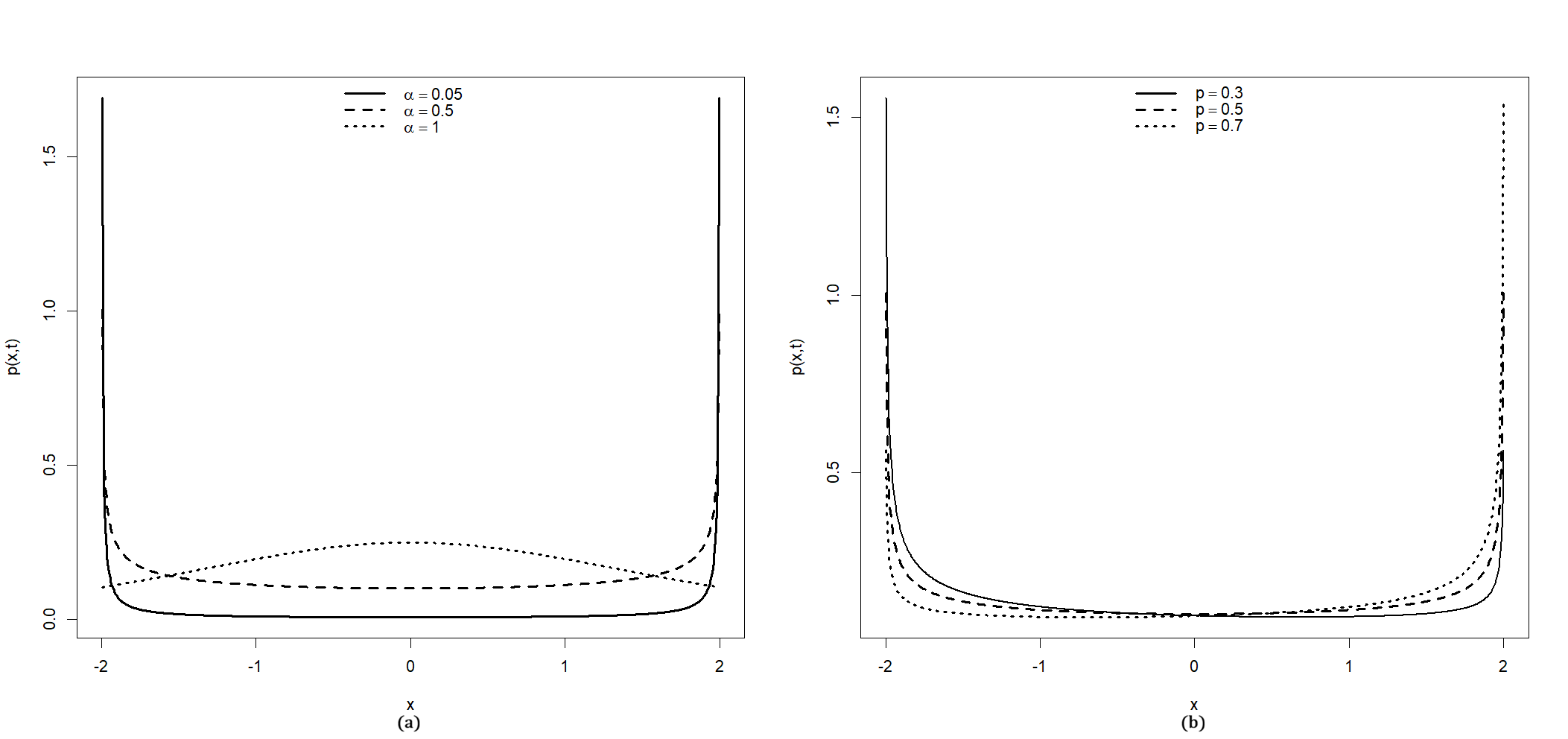}
    	\caption{Plots of $p(x,t)$ (a) for different values of $\alpha$ with $t=2$, $v_1=v_2=1$, $\lambda=\mu=1$ and $p=0.5$, (b) for different values of $p$ with $t=2$, $v_1=v_2=1$, $\lambda=\mu=1$ and $\alpha=0.5$.}
    	\label{fig2} 
    \end{figure}
    \begin{figure}
    	\centering
    	\includegraphics[width=1\textwidth]{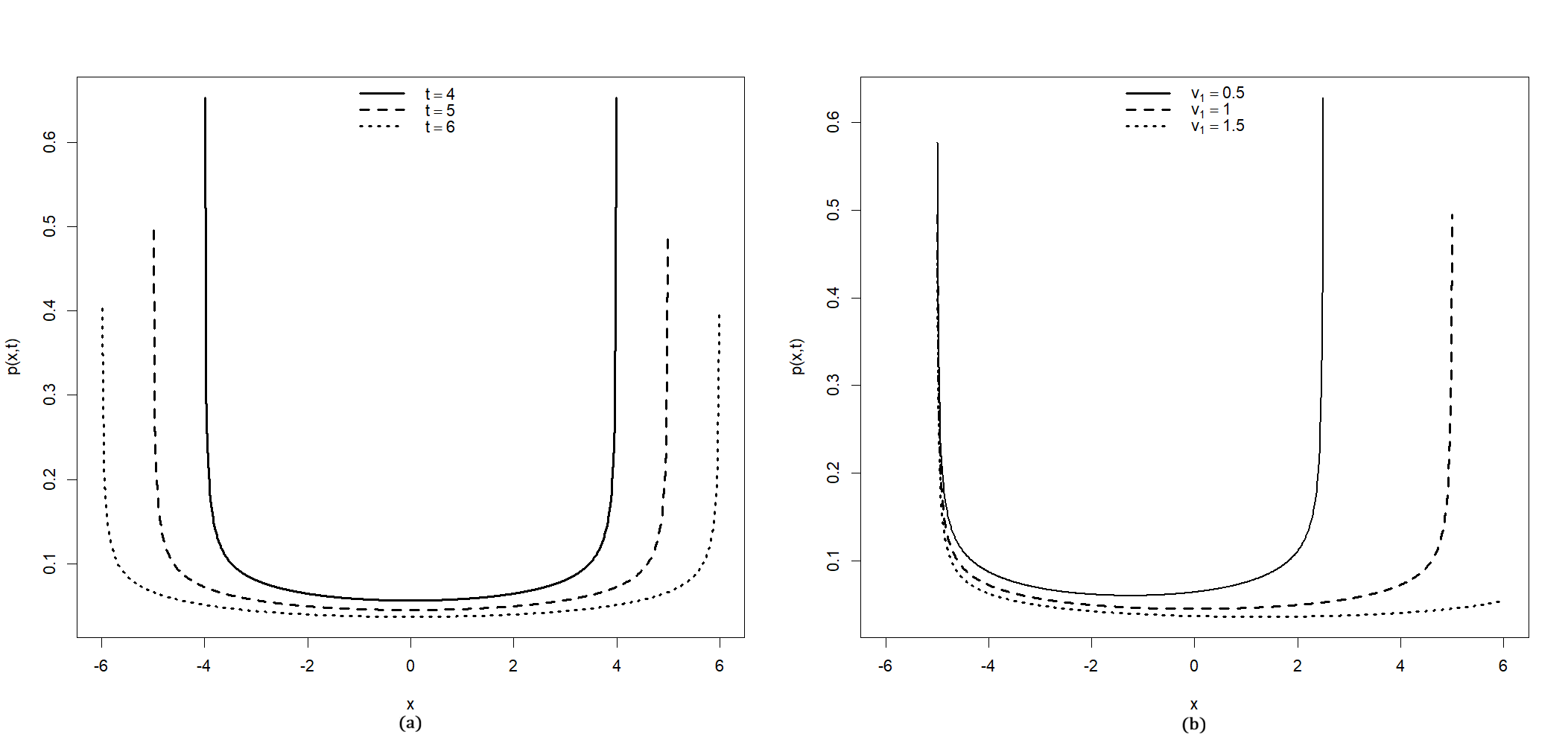}
    	\caption{Plots of $p(x,t)$ (a) for different values of $t$ with $v_1=v_2=1$, $\lambda=\mu=1$, $p=0.5$ and $\alpha=0.5$, (b) for different values of $v_1$ with $t=5$, $v_2=1$, $\lambda=\mu=1$, $\alpha=0.5$ and $p=0.5$.}
    	\label{fig3} 
    \end{figure}
    
    In Figure \ref{fig2}(a), the plots of $p(x,t)$ given in \eqref{pxt1/2} are compared for different values of $\alpha$. It is observed that when $\alpha$ decreases the plot gets peaked around boundaries of support of $X_{\alpha}(t)$. For $\alpha=1$, it shows similar behavior to that observed in Figure 3 of Crimaldi {\it et al.} (2013). In Figure \ref{fig2}(b), the plots of $p(x,t)$ given in \eqref{pxt1/2} are compared for different values of $p$. It is observed that as the probability $p$ of choosing velocity $v_1$ increases, the plot becomes relatively more concentrated near right side of the origin as expected. In Figure \ref{fig3}(a), the plots of $p(x,t)$ given in \eqref{pxt1/2} are compared for different values of $t$. It is observed that support of $X_{\alpha}(t)$ increases with increase in $t$. In Figure \ref{fig3}(b), the plots of $p(x,t)$ given in \eqref{pxt1/2} are compared for different values of $v_1$. As the velocity $v_1$ increases, the support of ${X}_{\alpha}(t)$ increases towards right.

	The conditional distribution of the $n$th event time of counting process $\{\mathcal{N}(t)\}_{t\geq0}$ given that the initial velocity is $v_1$ is given by (see Crimaldi {\it et al.} (2013), Eq. (18))
	\begin{align}\label{ConditionalDistTn}
		F_{T_n|S_{0}}(t|v_1)&\coloneqq\mathrm{Pr}\{T_{n}\leq t|S_{0}=v_1\}\nonumber\\
		&=\sum_{k=0}^{n-1}\mathrm{Pr}\{M_{n-1}=k|S_{0}=v_1\}\mathrm{Pr}\{R^{(k+1)}+L^{(n-k-1)}\leq t\},
	\end{align}
	where 
	{\small\begin{equation*}\label{ProbRL}
		\mathrm{Pr}\{R^{(k+1)}+L^{(n-k-1)}\leq t\}=\int_{0}^{t}F_{R}^{(k+1)}(t-s)f_{L}^{(n-k-1)}(s)\mathrm{d}s=\int_{0}^{t}F_{L}^{(n-k-1)}(t-s)f_{R}^{(k+1)}(s)\mathrm{d}s.
	\end{equation*}}

	\begin{lemma}\label{Tndist}
		Conditional on the event that the initial velocity is $v_1$, the distribution of $n$th event time of counting process $\{\mathcal{N}(t)\}_{t\geq0}$ associated with $X_{\alpha}(t)$ is given by
		\begin{align*}
			F_{T_n|S_{0}}(t|v_1)&=\lambda\sum_{k=0}^{n-1}\binom{n-1}{k}(k+1)p^{k}(1-p)^{n-k-1}\nonumber\\
			&\hspace{2.5cm} \cdot \sum_{i=0}^{n-k-1}\sum_{j=0}^{k}(-1)^{i+j}\binom{n-k-1}{i}\binom{k}{j} H^{\alpha}_{\alpha,1}(-\lambda(j+1),-\mu i;t).
		\end{align*}
	\end{lemma}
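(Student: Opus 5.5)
We start from the representation \eqref{ConditionalDistTn} and use the second form of the convolution,
\[
\mathrm{Pr}\{R^{(k+1)}+L^{(n-k-1)}\leq t\}=\int_{0}^{t}F_{L}^{(n-k-1)}(t-s)f_{R}^{(k+1)}(s)\,\mathrm{d}s .
\]
This form is convenient because $f_R^{(k+1)}$ contributes the factor $\lambda(k+1)$ and the kernel $s^{\alpha-1}E_{\alpha,\alpha}$. The cdf $F_L^{(n-k-1)}$ contributes plain one-parameter Mittag-Leffler functions, which is exactly the pair appearing in the definition \eqref{Hfuncdef} of $H^{\alpha}_{\alpha,1}$.

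For the mixing weights, in the Bernoulli scheme ($c=0$) we have $M_{n-1}\sim\mathrm{Bin}(n-1,p)$, and it is independent of $S_0$. Hence
\[
\mathrm{Pr}\{M_{n-1}=k\mid S_0=v_1\}=\binom{n-1}{k}p^k(1-p)^{n-1-k}.
\]
This is consistent with Crimaldi {\it et al.} (2013), Eqs. (45)--(46), which give the joint probabilities with $S_n$; summing those two over the value of $S_n$ yields this binomial weight. This explains the factor $p^{k}(1-p)^{n-k-1}$ in the statement.

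Next, we substitute \eqref{pdfRn} with $n$ replaced by $k+1$:
\[
f_R^{(k+1)}(s)=\lambda(k+1)s^{\alpha-1}\sum_{j=0}^{k}(-1)^j\binom{k}{j}E_{\alpha,\alpha}(-\lambda(j+1)s^\alpha).
\]
We also substitute \eqref{cdfLn} with $n$ replaced by $n-k-1$:
\[
F_L^{(n-k-1)}(t-s)=\sum_{i=0}^{n-k-1}(-1)^i\binom{n-k-1}{i}E_{\alpha}(-i\mu(t-s)^\alpha).
\]
Note that $E_\alpha=E_{\alpha,1}$ and $(t-s)^{1-1}=1$. Interchanging the finite sums with the integral, each term is then
\[
\int_0^t s^{\alpha-1}E_{\alpha,\alpha}(-\lambda(j+1)s^\alpha)\,E_{\alpha,1}(-\mu i(t-s)^\alpha)\,\mathrm{d}s=H^{\alpha}_{\alpha,1}(-\lambda(j+1),-\mu i;t),
\]
directly by \eqref{Hfuncdef}. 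Collecting the factors $\lambda$, $(k+1)$, the binomial weights and the signs $(-1)^{i+j}$ gives the stated formula.

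The only point needing care is the edge case $k=n-1$, where $L^{(0)}=0$. In that case we must check that the convention $F_L^{(0)}\equiv1$ agrees with the $i=0$ term of \eqref{cdfLn}. It does: for $n-k-1=0$ the sum reduces to $E_\alpha(0)=1$, and then $H^{\alpha}_{\alpha,1}(-\lambda(j+1),0;t)=\int_0^t s^{\alpha-1}E_{\alpha,\alpha}(-\lambda(j+1)s^\alpha)\,\mathrm{d}s$, which is consistent with $F_R^{(n)}(t)$. Otherwise the argument is bookkeeping; no evaluation of $H$ via \eqref{HfuncForm} is needed, since the statement is left in terms of $H$.
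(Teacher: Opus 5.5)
Your proposal is correct and is essentially the paper's proof: you insert the pdf of $R^{(k+1)}$ and the cdf of $L^{(n-k-1)}$ into the second convolution form of the conditional distribution of $T_n$, interchange the finite sums with the integral to recognize each term as $H^{\alpha}_{\alpha,1}(-\lambda(j+1),-\mu i;t)$, and weight by $\mathrm{Pr}\{M_{n-1}=k\mid S_0=v_1\}=\binom{n-1}{k}p^{k}(1-p)^{n-k-1}$. Your independence argument for that binomial weight and your check of the case $k=n-1$ (where $F_L^{(0)}\equiv 1$) are minor additions that the paper leaves implicit by citing Crimaldi et al. (2013).
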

	\begin{proof}
		Consider the following integral:

        {\small\begin{align}\label{IntTnS0}
        	\int_{0}^{t}
        	&F_{L}^{(n-k-1)}(t-s)f_{R}^{(k+1)}(s)\mathrm{d}s\nonumber\\
        	&=\lambda(k+1)\sum_{i=0}^{n-k-1}\sum_{j=0}^{k}(-1)^{i+j}\binom{n-k-1}{i}\binom{k}{j}\int_{0}^{t}E_{\alpha}(-\mu i(t-s)^\alpha)s^{\alpha-1}E_{\alpha,\alpha}(-\lambda(j+1)s^\alpha)\mathrm{d}s\nonumber\\
        	&=\lambda(k+1)\sum_{i=0}^{n-k-1}\sum_{j=0}^{k}(-1)^{i+j}\binom{n-k-1}{i}\binom{k}{j}H^{\alpha}_{\alpha,1}(-\lambda(j+1),-\mu i;t),
        \end{align}}
        where we have used \eqref{Hfuncdef}.
		Also, we have (see Crimaldi {\it et al.} (2013), p. 1132)
		\begin{equation}\label{ProbC0}
			\mathrm{Pr}\{M_{n-1}=k|S_{0}=v_1\}=\binom{n-1}{k}p^{k}(1-p)^{n-k-1}.
		\end{equation}
	   By using \eqref{IntTnS0} and \eqref{ProbC0} in \eqref{ConditionalDistTn}, we get the required result.
	\end{proof}
	
	\begin{remark}
		By using \eqref{HfuncForm} in Lemma \ref{Tndist}, we have
		{\small\begin{align*}
			F_{T_n|S_{0}}(t|v_1)&=\lambda\sum_{k=0}^{n-1}\binom{n-1}{k}(k+1)p^{k}(1-p)^{n-k-1}\sum_{i=0}^{n-k-1}\sum_{j=0}^{k}(-1)^{i+j}\binom{n-k-1}{i}\binom{k}{j}\nonumber\\
			&\ \ \cdot\Big(\mathds{1}_{\{\mu i=\lambda(j+1)\}}t^\alpha E_{\alpha,\alpha+1}^{2}(-\mu it^\alpha)+\mathds{1}_{\{\mu i\neq\lambda(j+1)\}}\Big(\frac{E_{\alpha}(-\lambda(j+1)t^{\alpha})-E_{\alpha}(-\mu i t^{\alpha})}{\mu i-\lambda(j+1)}\Big)\Big).
		\end{align*}}
	\end{remark}
	
	Let $G^{\alpha}_{\beta,\eta,\sigma}(a,b,c;t) = H^{\alpha}_{\beta,\eta}(a,b;)*t^{\sigma-1}E_{\alpha,\sigma}(ct^\alpha)$, $t\geq0$. That is,
	\begin{equation}\label{GfuncDef}
		 G^{\alpha}_{\beta,\eta,\sigma}(a,b,c;t)=\int_{0}^{t}H^{\alpha}_{\beta,\eta}(a,b;t-s)s^{\sigma-1}E_{\alpha,\sigma}(cs^\alpha)\mathrm{d}s.
	\end{equation}
	Let us derive the explicit expression of $G^{\alpha}_{\beta,\eta,\sigma}(a,b,c;t)$ which can be obtained in five distinct cases based on the equality of $a$, $b$ and $c$. For the case $a=b=c$, we have 
	\begin{equation*}
		G^{\alpha}_{\beta,\eta,\sigma}(a,a,a;t)=t^{\beta+\eta+\sigma-1}E_{\alpha,\beta+\eta+\sigma}^{3}(at^\alpha),
	\end{equation*}
	which is obtained by using \eqref{HfuncForm} and Eq. (2.21) of Kilbas {\it et al.} (2004). Now, for the case $a=b\neq c$, we have
	\begin{equation}\label{GfunaacDef}
		G^{\alpha}_{\beta,\eta,\sigma}(a,a,c;t)=\int_{0}^{t}(t-s)^{\beta+\eta-1}E_{\alpha,\beta+\eta}^{2}(a(t-s)^\alpha)s^{\sigma-1}E_{\alpha,\sigma}(cs^\alpha)\mathrm{d}s,
	\end{equation}
	where we have used \eqref{HfuncForm}. On taking the Laplace transform of \eqref{GfunaacDef} and by using \eqref{LTMittag}, we get
	\begin{align*}
		\mathbb{L}(G^{\alpha}_{\beta,\eta,\sigma}(a,a,c;t))(z)&=\frac{z^{3\alpha-\beta-\eta-\sigma}}{(z^\alpha-a)^2(z^\alpha-c)}\nonumber\\
		&=\frac{1}{(a-c)^2}\Big(\frac{a(a-c)z^{2\alpha-(\beta+\eta+\sigma)}}{(z^\alpha-a)^2}-\frac{acz^{\alpha-(\beta+\eta+\sigma)}}{z^\alpha-a}+\frac{c^2z^{\alpha-(\beta+\eta+\sigma)}}{z^\alpha-c}\Big),
	\end{align*}
	whose inverse Laplace transform yields
	\begin{equation*}
		G^{\alpha}_{\beta,\eta,\sigma}(a,a,c;t)=\frac{t^{\beta+\eta+\sigma-1}}{(a-c)^2}\big(a(a-c)E_{\alpha,\beta+\eta+\sigma}^{2}(at^\alpha)-acE_{\alpha,\beta+\eta+\sigma}(at^\alpha)+c^{2}E_{\alpha,\beta+\eta+\sigma}(ct^\alpha)\big).
	\end{equation*}
	In this way, we can obtain $G^{\alpha}_{\beta,\eta,\sigma}(a,b,c;t)$ for other cases too. It is given by
	{\smaller\begin{align*}\label{GFuncform}
		 G^{\alpha}_{\beta,\eta,\sigma}(a,b,c;t)=
		{\small\begin{cases}
			\vspace{0.2cm}
			t^{d-1}E_{\alpha,d}^{3}(at^\alpha), \, a=b=c,\\ \vspace{0.2cm}
			\frac{t^{d-1}}{(a-c)^2}\big(a(a-c)E_{\alpha,d}^{2}(at^\alpha)-acE_{\alpha,d}(at^\alpha)+c^{2}E_{\alpha,d}(ct^\alpha)\big), \, a=b\neq c,\\ \vspace{0.2cm}
			\frac{t^{d-1}}{a-b}\Big(\frac{a}{a-b}\big(aE_{\alpha,d}(at^\alpha)-bE_{\alpha,d}(bt^\alpha)\big)-bE_{\alpha,d}^{2}(bt^\alpha)\Big),\, a\neq b=c,\\ \vspace{0.2cm}
			\frac{t^{d-1}}{a-b}\Big(aE_{\alpha,d}^{2}(at^\alpha)-\frac{b}{a-b}\big(aE_{\alpha,d}(at^\alpha)-bE_{\alpha,d}(bt^\alpha)\big)\Big),\, c=a\neq b,\\ 
			\frac{t^{d-1}}{a-b}\Big(\frac{a^2}{a-c}E_{\alpha,d}(at^\alpha)-\frac{b^2}{b-c}E_{\alpha,d}(bt^\alpha)+\frac{c^2(a-b)}{(a-c)(b-c)}E_{\alpha,d}(ct^\alpha)\Big),\, a\neq b\neq c,
		\end{cases}}
	\end{align*}}
	where $d=\beta+\eta+\sigma$.
	
	
	
	\begin{theorem}\label{CondExpectVt}
		Conditional on the event that initial velocity is $v_1$, the expected velocity is given by
		\begin{equation*}
			\mathbb{E}(V_{\alpha}(t)|V_{\alpha}(0)=v_1)=v_1 E_{\alpha}(-\lambda t^\alpha)+v_1p\sum_{n=1}^{\infty}\psi_n(t|v_1)-v_2(1-p)\sum_{n=1}^{\infty}\xi_n(t|v_1),
		\end{equation*}
		where 
		{\small\begin{align*}
			\psi_n(t|v_1)
			&=\lambda\sum_{k=0}^{n-1}\binom{n-1}{k}(k+1)p^{k}(1-p)^{n-k-1}\sum_{i=0}^{n-k-1}\sum_{j=0}^{k}(-1)^{i+j}\binom{n-k-1}{i}\binom{k}{j}\nonumber\\
			&\hspace{1.5cm} \cdot\big(H^{\alpha}_{\alpha,1}(-\lambda(j+1),-\mu i;t)-\lambda(n+1)G^{\alpha}_{\alpha,1,\alpha}(-\lambda(j+1),-\mu i,-\lambda(n+1);t)\big)
		\end{align*}}
		and
		{\small\begin{align*}
			\xi_n(t|v_1)
			&=\lambda\sum_{k=0}^{n-1}\binom{n-1}{k}(k+1)p^{k}(1-p)^{n-k-1}\sum_{i=0}^{n-k-1}\sum_{j=0}^{k}(-1)^{i+j}\binom{n-k-1}{i}\binom{k}{j}\nonumber\\
			&\hspace{1.5cm} \cdot\big(H^{\alpha}_{\alpha,1}(-\lambda(j+1),-\mu i;t)-\mu(n+1)G^{\alpha}_{\alpha,1,\alpha}(-\lambda(j+1),-\mu i,-\mu(n+1);t)\big).
		\end{align*}}
	\end{theorem}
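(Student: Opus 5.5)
The plan is to condition on the number of events by time $t$. Since $V_{\alpha}(t)=S_{\mathcal{N}(t)}$,
\begin{equation*}
\mathbb{E}(V_{\alpha}(t)|V_{\alpha}(0)=v_1)=\sum_{n=0}^{\infty}\big(v_1\mathrm{Pr}\{\mathcal{N}(t)=n,S_n=v_1|S_0=v_1\}-v_2\mathrm{Pr}\{\mathcal{N}(t)=n,S_n=-v_2|S_0=v_1\}\big).
\end{equation*}
The term $n=0$ is immediate. Given $S_0=v_1$, we have $\{\mathcal{N}(t)=0\}=\{R_1>t\}$, so it contributes $v_1\bar F_{R_1}(t)=v_1E_{\alpha}(-\lambda t^{\alpha})$. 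This is the first summand of the statement.

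For $n\geq1$, write $\{\mathcal{N}(t)=n\}=\{T_n\leq t<T_{n+1}\}$ and $T_{n+1}=T_n+W_n$, where $W_n$ is the length of the $(n+1)$th interval of motion. The trial $Y_{n+1}$ that fixes $S_n$ is performed at $T_n$. In the Bernoulli scheme ($c=0$) it is independent of $Y_1,\dots,Y_n$. It is also independent of all waiting times, and $T_n$ is a function of $Y_1,\dots,Y_n$ and the waiting times. Hence $\mathrm{Pr}\{S_n=v_1|S_0=v_1,T_n\}=p$ and $\mathrm{Pr}\{S_n=-v_2|S_0=v_1,T_n\}=1-p$. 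On $\{S_n=v_1\}$ the interval $W_n$ is a rightward interval; on $\{S_n=-v_2\}$ it is a leftward one. In both cases $W_n$ is independent of $T_n$ given the trials. I will then write
\begin{equation*}
\mathrm{Pr}\{\mathcal{N}(t)=n,S_n=v_1|S_0=v_1\}=p\big(F_{T_n|S_0}(t|v_1)-\mathrm{Pr}\{T_n+W_n\leq t\,|\,S_n=v_1,S_0=v_1\}\big),
\end{equation*}
and the analogous identity with $1-p$ for $S_n=-v_2$. The second probability is the convolution of the law of $T_n$ with the density of $W_n$.

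Following the parametrization in the statement, I take the density of $W_n$ to be $(n+1)\lambda t^{\alpha-1}E_{\alpha,\alpha}(-\lambda(n+1)t^{\alpha})$ on $\{S_n=v_1\}$, and the same with $\mu$ in place of $\lambda$ on $\{S_n=-v_2\}$. This is the step I regard as the main obstacle. The indexing of the $R_i$, $L_i$ in the definition of $Z_n$ ties the parameter to the number of previous rightward or leftward intervals, not to $n$. So I must check carefully which waiting time is in force, and justify the $R_{n+1}$ / $L_{n+1}$ convention the stated formula uses. If the finer indexing is required, the same computation goes through, but the factor $\lambda(n+1)$ becomes $\lambda(k+2)$ inside the $k$-sum.

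Granting this, I insert Lemma \ref{Tndist}, which writes $F_{T_n|S_0}(t|v_1)$ as a finite combination of $H^{\alpha}_{\alpha,1}(-\lambda(j+1),-\mu i;t)$. The convolution with $W_n$ then acts termwise:
\begin{equation*}
H^{\alpha}_{\alpha,1}(-\lambda(j+1),-\mu i;\cdot)*\lambda(n+1)t^{\alpha-1}E_{\alpha,\alpha}(-\lambda(n+1)t^{\alpha})=\lambda(n+1)G^{\alpha}_{\alpha,1,\alpha}(-\lambda(j+1),-\mu i,-\lambda(n+1);t),
\end{equation*}
by the definition \eqref{GfuncDef}. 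The $\mu$ case is identical, with $-\mu(n+1)$ as the third argument. Subtracting termwise gives exactly $\psi_n(t|v_1)$ and $\xi_n(t|v_1)$.

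Finally, I sum over $n\geq1$. Interchanging sum and expectation is legitimate because $|V_{\alpha}(t)|\leq\max(v_1,v_2)$ and the events $\{\mathcal{N}(t)=n\}$ partition the sample space. This yields the stated formula.
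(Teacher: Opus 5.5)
Your decomposition over $\{\mathcal{N}(t)=n\}$ is what the paper imports from Proposition 2 of Crimaldi \emph{et al.} (2013). Your remaining steps, Lemma \ref{Tndist} followed by termwise convolution into $G^{\alpha}_{\alpha,1,\alpha}$, are exactly \eqref{IntPsi}--\eqref{IntXi}, so the route is the paper's. The trouble is the step you decided to grant. It is not merely unjustified: it is false for the model of Section \ref{SecStochModel}, so no argument can close it.

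By the definition of $Z_n$, on $\{S_0=v_1,\,M_{n-1}=k,\,S_n=v_1\}$ the interval $[T_n,T_{n+1})$ has length $R_{k+2}$. On $\{S_0=v_1,\,M_{n-1}=k,\,S_n=-v_2\}$ it has length $L_{n-k}$. The paper itself uses this indexing ($\bar{F}_{R_{k+2}}$ and $\bar{F}_{L_{n-k}}$) in the proof of Theorem \ref{ThmAbsolutelyCtsProb}. Since the law of $W_n$ depends on $M_{n-1}$, your sentence ``the second probability is the convolution of the law of $T_n$ with the density of $W_n$'' holds only conditionally on $M_{n-1}=k$. The convolution must therefore be taken inside the $k$-sum.

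Already $n=1$ exposes the error. The true contribution is $-v_2(1-p)\mathrm{Pr}\{R_1\leq t<R_1+L_1\}$. By contrast, $\xi_1(t|v_1)=\mathrm{Pr}\{R_1\leq t<R_1+L'\}$, where $L'$ is Mittag-Leffler with parameter $2\mu$. For $\alpha=1$ every other contribution agrees to order $t^2$, and the stated right-hand side exceeds the true conditional mean velocity by $v_2(1-p)\lambda\mu t^2/2+O(t^3)$.

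So the paper's proof reaches the displayed formula through the same identification you flagged. In \eqref{IntPsi}--\eqref{IntXi} it convolves the whole mixture $F_{T_n|S_0}(\cdot|v_1)$ with the single densities $f_{R_{n+1}}$ and $f_{L_{n+1}}$. As far as the model goes, the statement as written is incorrect.

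The rest of your argument is sound: the $n=0$ term, $\mathrm{Pr}\{S_n=v_1|S_0=v_1,T_n\}=p$, conditional independence of $W_n$ and $T_n$ given the trials, and the interchange of sum and expectation. With the correct indexing it proves the corrected identity:
\begin{itemize}
\item in $\psi_n(t|v_1)$, replace $\lambda(n+1)G^{\alpha}_{\alpha,1,\alpha}(-\lambda(j+1),-\mu i,-\lambda(n+1);t)$ by $\lambda(k+2)G^{\alpha}_{\alpha,1,\alpha}(-\lambda(j+1),-\mu i,-\lambda(k+2);t)$;
\item in $\xi_n(t|v_1)$, replace $\mu(n+1)G^{\alpha}_{\alpha,1,\alpha}(-\lambda(j+1),-\mu i,-\mu(n+1);t)$ by $\mu(n-k)G^{\alpha}_{\alpha,1,\alpha}(-\lambda(j+1),-\mu i,-\mu(n-k);t)$.
\end{itemize}
The leftward index is $n-k$, not $k+2$; your fallback remark only covered the $\lambda$ factor. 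The $\xi_n$ correction is never vacuous, because $n-k\leq n<n+1$ for every $k$.
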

	\begin{proof}
		Consider the following integral:
		\begin{align}\label{IntPsi}
			\int_{0}^{t}
			&F_{T_n|S_{0}}(t-s|v_1)f_{R_{n+1}}(s)\mathrm{d}s\nonumber\\
			&=\lambda^2(n+1)\sum_{k=0}^{n-1}\binom{n-1}{k}(k+1)p^{k}(1-p)^{n-k-1}\sum_{i=0}^{n-k-1}\sum_{j=0}^{k}(-1)^{i+j}\binom{n-k-1}{i}\binom{k}{j}\nonumber\\
			&\ \ \cdot\int_{0}^{t} H^{\alpha}_{\alpha,1}(-\lambda(j+1),-\mu i;t-s)s^{\alpha-1}E_{\alpha,\alpha}(-\lambda(n+1)s^\alpha)\mathrm{d}s\nonumber\\
			&=\lambda^2(n+1)\sum_{k=0}^{n-1}\binom{n-1}{k}(k+1)p^{k}(1-p)^{n-k-1}\nonumber\\
			&\hspace{1cm} \cdot \sum_{i=0}^{n-k-1}\sum_{j=0}^{k}(-1)^{i+j}\binom{n-k-1}{i}\binom{k}{j} G^{\alpha}_{\alpha,1,\alpha}(-\lambda(j+1),-\mu i,-\lambda(n+1);t),
		\end{align}
		where we have used \eqref{GfuncDef} in the last step. Similarly, we get
		\begin{align}\label{IntXi}
			\int_{0}^{t}&F_{T_n|S_{0}}(t-s|v_1)f_{L_{n+1}}(s)\mathrm{d}s\nonumber\\
			&=\lambda\mu(n+1)\sum_{k=0}^{n-1}\binom{n-1}{k}(k+1)p^{k}(1-p)^{n-k-1}\nonumber\\
			&\hspace{1cm} \cdot \sum_{i=0}^{n-k-1}\sum_{j=0}^{k} (-1)^{i+j}\binom{n-k-1}{i}\binom{k}{j} G^{\alpha}_{\alpha,1,\alpha}(-\lambda(j+1),-\mu i,-\mu(n+1);t).
		\end{align}
		By using \eqref{IntPsi} and \eqref{IntXi} in Proposition 2 of Crimaldi {\it et al.} (2013), the required result is obtained. 
	\end{proof}
	
	\begin{remark}
		For $\alpha=1$, we have
		\begin{equation*}
			H^{1}_{1,1}(a,b;t)=\mathds{1}_{\{a=b\}}t e^{bt}+\mathds{1}_{\{a\neq b\}}\Big(\frac{e^{at}-e^{b t}}{a-b}\Big)
		\end{equation*}
		and
		\begin{align*}
			G^{1}_{1,1,1}(a,b,c;t)
			&=
			\begin{cases}
				\frac{1}{(a-c)^2}\big(e^{ct}-e^{at}+(a-c)te^{at}\big), \, a=b\neq c,\\
				\frac{e^{at}}{(b-a)(c-a)}+\frac{e^{bt}}{(a-b)(c-b)}+\frac{e^{ct}}{(b-c)(a-c)}, \, a\neq b\neq c.
			\end{cases}
		\end{align*}
		So, for $\alpha=1$, Theorem \ref{CondExpectVt} reduces to the corresponding result of generalized telegraph process with velocity driven by random trials: {\it Bernoulli scheme} (see Crimaldi {\it et al.} (2013), Proposition 4).
	\end{remark}
	
	\section{Telegraph process with generalized Mittag-Leffler waiting times and velocity driven by random trials}\label{SecGMLWaitTime}
	
	From Section \ref{SecStochModel}, we recall the generalized telegraph process with velocity driven by random trials: Case II, that is, $c>0$. Here, we analyze the {\it P\'olya urn scheme} with the assumption that $R_1$ and $L_1$ have the generalized Mittag-Leffler distributions, defined in \eqref{GenMLDist}, whereas $R_n$ and $L_n$ follow the Mittag-Leffler distribution for all $n \geq 2$. That is, we take the pdfs of $R_n$ and $L_n$ in the following form:
    \begin{align*}
    	f_{R_1}(t)&=\lambda^{\frac{b}{c}+1}t^{\alpha\big(\frac{b}{c}+1\big)-1}E_{\alpha,\alpha\big(\frac{b}{c}+1\big)}^{\frac{b}{c}+1}(-\lambda t^{\alpha}),\\
    	f_{L_1}(t)&=\mu^{\frac{r}{c}+1}t^{\alpha\big(\frac{r}{c}+1\big)-1}E_{\alpha,\alpha\big(\frac{r}{c}+1\big)}^{\frac{r}{c}+1}(-\mu t^{\alpha}),
    \end{align*}
    and for $n\geq2$, $f_{R_n}(t)=\lambda t^{\alpha-1}E_{\alpha,\alpha}(-\lambda t^\alpha)$, 
    $f_{L_n}(t)=\mu t^{\alpha-1}E_{\alpha,\alpha}(-\mu t^\alpha)$ for all $t\geq0$.
	
	The following result will be used.
	
	\begin{proposition}\label{PropX+YDist}
		Let $X$ be a generalized Mittag-Leffler random variable with parameters $\alpha>0$, $\delta>0$ and $\lambda>0$. Also, let $Y=Y_{1}+Y_{2}+\dots+Y_{n}$, where $Y_{i}$'s and $X$ are mutually independent such that $\bar{F}_{Y_i}(t)=E_{\alpha}(-\lambda t^\alpha)$ for all $1\leq i\leq n$. Then, $X+Y$ has generalized Mittag-Leffler distribution with parameters $\alpha$, $\delta+n$ and $\lambda$.
	\end{proposition}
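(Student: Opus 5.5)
The plan is to argue through Laplace transforms, as in the proof of Proposition \ref{Xpdf}. A law on $[0,\infty)$ is determined by its Laplace transform, so it is enough to show that $\mathbb{E}(e^{-z(X+Y)})$ equals the transform of a generalized Mittag-Leffler variable with parameters $\alpha$, $\delta+n$ and $\lambda$.

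First I record the two transforms I need. By definition (see \eqref{GenMLDist}), $\mathbb{E}(e^{-zX})=(\lambda/(z^\alpha+\lambda))^{\delta}$. For each $Y_i$, the tail $\bar{F}_{Y_i}(t)=E_{\alpha}(-\lambda t^\alpha)$ gives the density $f_{Y_i}(t)=\lambda t^{\alpha-1}E_{\alpha,\alpha}(-\lambda t^\alpha)$. Applying \eqref{LTMittag} with $\beta=\alpha$, $\gamma=1$ and $\omega=-\lambda$ then yields $\mathbb{E}(e^{-zY_i})=\lambda/(z^\alpha+\lambda)$. This is also just the case $\delta=1$ of the generalized Mittag-Leffler law, as already noted after \eqref{GenMLDist}.

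Next I use the mutual independence of $X,Y_1,\dots,Y_n$ to factor the transform:
\begin{equation*}
\mathbb{E}(e^{-z(X+Y)})=\mathbb{E}(e^{-zX})\prod_{i=1}^{n}\mathbb{E}(e^{-zY_i})=\Big(\frac{\lambda}{z^\alpha+\lambda}\Big)^{\delta+n}.
\end{equation*}
This is exactly the transform in the definition of the generalized Mittag-Leffler law with parameters $\alpha$, $\delta+n$ and $\lambda$. As an explicit check, inverting with \eqref{LTMittag} (taking $\gamma=\delta+n$ and $\beta=\alpha(\delta+n)$, so that $z^{\alpha\gamma-\beta}=1$) gives the density $\lambda^{\delta+n}t^{\alpha(\delta+n)-1}E^{\delta+n}_{\alpha,\alpha(\delta+n)}(-\lambda t^\alpha)$, which is \eqref{GenMLDist} with $\delta$ replaced by $\delta+n$.

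There is no real obstacle here. The only point needing care is the appeal to uniqueness of the Laplace transform, which holds because all variables are nonnegative and the transforms agree for all real $z>0$ (for $\omega=-\lambda$ the condition $z>|\omega|^{1/\alpha}$ in \eqref{LTMittag} can be relaxed to $z>0$ by analytic continuation). One should also note that for the law to exist the range is $0<\alpha\le 1$, as in the definition.
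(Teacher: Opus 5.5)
Your proposal is correct and matches the paper's proof: factor the Laplace transform by independence to get $(\lambda/(z^\alpha+\lambda))^{\delta+n}$, then invert it using \eqref{LTMittag} to recover the generalized Mittag-Leffler density with $\delta$ replaced by $\delta+n$. Your side remarks are sound points the paper leaves implicit: uniqueness of the Laplace transform for nonnegative variables, and that the law exists only for $0<\alpha\le 1$.
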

	\begin{proof}
		The Laplace transform of $X+Y$ is given by
		\begin{equation}\label{LTX+Y}
			\mathbb{E}(e^{-z(X+Y)})
			=\mathbb{E}(e^{-zX})\prod_{i=1}^{n}\mathbb{E}(e^{-zY_{i}})
			=\Big(\frac{\lambda}{z^{\alpha}+\lambda}\Big)^{\delta+n}.
		\end{equation}
		On taking the inverse Laplace transform of \eqref{LTX+Y} and by using \eqref{LTMittag}, we get
		\begin{equation*}
			f_{X+Y}(t)=\lambda^{\delta+n}t^{\alpha(\delta+n)-1}E_{\alpha,\alpha(\delta+n)}^{\delta+n}(-\lambda t^\alpha),\,t\geq0.
		\end{equation*}
		This proves the result.
	\end{proof}
	
	Recall that $R^{(n)}=R_1 +R_2 +\cdots + R_n$ and $L^{(n)}=L_1 +L_2 +\cdots + L_n$, $n\in\mathbb{N}$. By using Proposition \ref{PropX+YDist}, their pdfs are given by 
	\begin{equation}\label{cpdfRn}
		f_{R}^{(n)}(t)=\lambda^{\frac{b}{c}+n}t^{\alpha\big(\frac{b}{c}+n\big)-1}E_{\alpha,\alpha\big(\frac{b}{c}+n\big)}^{\frac{b}{c}+n}(-\lambda t^{\alpha})
	\end{equation}
	and
	\begin{equation}\label{cpdfLn}
		f_{L}^{(n)}(t)=\mu^{\frac{r}{c}+n}t^{\alpha\big(\frac{r}{c}+n\big)-1}E_{\alpha,\alpha\big(\frac{r}{c}+n\big)}^{\frac{r}{c}+n}(-\mu t^{\alpha}),\,t\geq0,
	\end{equation}
	respectively.
	By using Eq. (5.1.19) of Gorenflo {\it et al.} (2014), their cdf are obtained in the following form:
	\begin{equation}\label{ccdfRn}
		F_{R}^{(n)}(t)=(\lambda t^\alpha)^{\frac{b}{c}+n}E_{\alpha,\alpha\big(\frac{b}{c}+n\big)+1}^{\frac{b}{c}+n}(-\lambda t^\alpha)
	\end{equation}
	and
	\begin{equation}\label{ccdfLn}
		F_{L}^{(n)}(t)=(\mu t^\alpha)^{\frac{r}{c}+n}E_{\alpha,\alpha\big(\frac{r}{c}+n\big)+1}^{\frac{r}{c}+n}(-\mu t^\alpha),\,t\geq0,
	\end{equation}
	respectively.
	
	We call this process as the telegraph process with generalized Mittag-Leffler waiting times and velocity driven by random trials, and denote it by $\{(\mathbb{X}_{\alpha}(t),\mathbb{V}_\alpha(t))\}_{t\geq0}$.
	
	For $\alpha=1$, \eqref{cpdfRn} and \eqref{cpdfLn} reduces to
	$f_{R}^{(n)}(t)={(\lambda^{{b}/{c}+n}t^{{b}/{c}+n-1}e^{-\lambda t})}/{\Gamma\big({b}/{c}+n\big)}$
	and 
	$f_{L}^{(n)}(t)={(\mu^{{r}/{c}+n}t^{{r}/{c}+n-1}e^{-\mu t})}/{\Gamma\big({r}/{c}+n\big)}$,
	respectively. That is, process  $\{(\mathbb{X}_{\alpha}(t),\mathbb{V}_{\alpha}(t))\}_{t\geq0}$ reduces to the generalized telegraph process with velocity driven by random trials: {\it P\'olya urn scheme} (see Crimaldi {\it et al.} (2013), p. 1126).
	
	
	Now, we obtain the discrete component of the probability law of $\{\mathbb{X}_{\alpha}(t)\}_{t\geq0}$, $t\geq0$ in terms of the Fox-Wright function defined in \eqref{DefFoxWright}.
	
	\begin{proposition}\label{cDiscProbLaw}
		The probabilities that the particle is at position $v_1 t$ and $-v_2 t$ are given by
		{\begin{align}\label{cDiscProbv1t}
			\mathrm{Pr}\{\mathbb{X}_{\alpha}(t)=v_1 t\}&=\frac{b}{b+r}\Big(1-(\lambda t^\alpha)^{\frac{b}{c}+1}E_{\alpha,\alpha(\frac{b}{c}+1)}^{\frac{b}{c}+1}(-\lambda t^\alpha)\Big)\nonumber\\
			&\ \ +\frac{b(\lambda t^\alpha)^{\frac{b}{c}+1}}{(b+r)\Gamma(\frac{b}{c}+1)}{}_2\Psi_2\!\left[
			\begin{array}{c}
				(\frac{b}{c}+2,1) (\frac{b+r}{c}+1,1)\vspace{0.5em}\\
				(\frac{b+r}{c}+2,1) (\alpha(\frac{b}{c}+1)+1,\alpha)
			\end{array}
			\,\middle|\, -\lambda t^\alpha
			\right]
		\end{align}}
	and
	\begin{align}\label{cDiscProbv2t}
		\mathrm{Pr}\{\mathbb{X}_{\alpha}(t)=-v_2 t\}&=\frac{r}{b+r}\Big(1-(\mu t^\alpha)^{\frac{r}{c}+1}E_{\alpha,\alpha(\frac{r}{c}+1)}^{\frac{r}{c}+1}(-\mu t^\alpha)\Big)\nonumber\\
		&\ \ +\frac{r(\mu t^\alpha)^{\frac{r}{c}+1}}{(b+r)\Gamma(\frac{r}{c}+1)}{}_2\Psi_2\!\left[
		\begin{array}{c}
			(\frac{r}{c}+2,1) (\frac{b+r}{c}+1,1)\vspace{0.5em}\\
			(\frac{b+r}{c}+2,1) (\alpha(\frac{r}{c}+1)+1,\alpha)
		\end{array}
		\,\middle|\, -\mu t^\alpha
		\right],
	\end{align}
	respectively.
	\end{proposition}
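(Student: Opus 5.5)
The plan is to start from the general representation of the discrete component given in Proposition 1 of Crimaldi {\it et al.} (2013). The particle sits at $v_1t$ exactly when every trial performed up to time $t$ produces the velocity $v_1$. Writing $F^{(0)}_R\equiv 1$, this gives
\begin{equation*}
\mathrm{Pr}\{\mathbb{X}_{\alpha}(t)=v_1t\}=\sum_{n=0}^{\infty}q_n\big(F^{(n)}_R(t)-F^{(n+1)}_R(t)\big),\qquad q_n=\mathrm{Pr}\{Y_1=\cdots=Y_{n+1}=1\}.
\end{equation*}
Under the P\'olya urn scheme,
\begin{equation*}
q_n=\prod_{i=0}^{n}\frac{b+ci}{b+r+ci}=\frac{(b/c)_{n+1}}{((b+r)/c)_{n+1}}.
\end{equation*}
I would separate the $n=0$ term. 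Since $q_0=b/(b+r)$ and $F^{(1)}_R$ is given by \eqref{ccdfRn} with $n=1$, this term produces the first summand $\frac{b}{b+r}\big(1-F^{(1)}_R(t)\big)$ of \eqref{cDiscProbv1t}, up to reconciling the Prabhakar index (the second parameter $\alpha(\frac{b}{c}+1)$ versus $\alpha(\frac{b}{c}+1)+1$ in \eqref{ccdfRn}). I would check this index carefully and, if necessary, rewrite it via the standard recurrence of Prabhakar functions.

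For the remaining sum over $n\ge1$, I would perform an Abel summation. This gives $\sum_{n\ge1}(q_n-q_{n-1})F^{(n)}_R(t)+q_0F^{(1)}_R(t)$, or equivalently the undifferenced form, whichever yields the cleaner coefficients. I would then insert the series \eqref{Mittag12} for each $F^{(n)}_R$ from \eqref{ccdfRn}:
\begin{equation*}
F^{(n)}_R(t)=\sum_{k\ge0}\frac{(-1)^k\,\Gamma(\frac{b}{c}+n+k)}{\Gamma(\frac{b}{c}+n)\,k!\,\Gamma(\alpha(\frac{b}{c}+n+k)+1)}(\lambda t^\alpha)^{\frac{b}{c}+n+k}.
\end{equation*}
Next I would regroup by the total power $m=n+k$ of $\lambda t^\alpha$. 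The factor $\Gamma(\alpha(\frac{b}{c}+m)+1)$ depends only on $m$, so it factors out. What remains is, for each $m$, a finite sum over $n$ of $q_n$-type Pochhammer ratios times $\Gamma(\frac{b}{c}+m)/(\Gamma(\frac{b}{c}+n)(m-n)!)$ with alternating signs, i.e.\ a terminating ${}_3F_2$ or ${}_2F_1$ at unit argument.

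The key step, and the one I expect to be the main obstacle, is evaluating this finite alternating sum in closed form. I would try the Chu--Vandermonde identity first, and the Pfaff--Saalsch\"utz identity if a balanced ${}_3F_2$ appears. The goal is to obtain exactly
\begin{equation*}
\frac{\Gamma(\frac{b}{c}+m+1)}{\Gamma(\frac{b}{c}+1)}\cdot\frac{\frac{b+r}{c}+m}{\frac{b+r}{c}+m+1},
\end{equation*}
up to factorial normalization. After shifting $m\mapsto k+1$, this produces the Fox--Wright ratio $\Gamma(\frac{b}{c}+2+k)\Gamma(\frac{b+r}{c}+1+k)/\Gamma(\frac{b+r}{c}+2+k)$ together with the prefactor $b(\lambda t^\alpha)^{\frac{b}{c}+1}/((b+r)\Gamma(\frac{b}{c}+1))$. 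An alternative route I would keep in reserve is to work with Laplace transforms in $t$. There, $\sum_n q_n\big(\lambda/(z^\alpha+\lambda)\big)^{b/c+n}$ is a Gauss hypergeometric series in $\lambda/(z^\alpha+\lambda)$, and the Euler-type integral for $(b/c)_{n+1}/((b+r)/c)_{n+1}$ (a Beta integral) would make the resummation transparent.

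Finally, I would justify the interchange of the $n$- and $k$-summations by absolute convergence. This follows because $q_n\le1$ and $E^{\gamma}_{\alpha,\beta}$ is entire, so the double series is dominated by a convergent series of the form $\sum (\lambda t^\alpha)^{m}\Gamma(\frac{b}{c}+m)/(m!\,\Gamma(\alpha m+1))$; the same bound shows $q_nF^{(n+1)}_R(t)\to0$, which the Abel summation needs. Formula \eqref{cDiscProbv2t} then follows by symmetry: I would replace $b\leftrightarrow r$, $\lambda\to\mu$, and $Y_i\to1-Y_i$, using \eqref{ccdfLn}.
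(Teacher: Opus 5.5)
Your plan is essentially the paper's proof. The paper also starts from Proposition~1 of Crimaldi \emph{et al.} (2013) with $q_n=\frac{b}{b+r}\big(\frac{b+c}{c}\big)_n\big(\big(\frac{b+r+c}{c}\big)_n\big)^{-1}$, which equals your $(b/c)_{n+1}/((b+r)/c)_{n+1}$. It then expands in the series \eqref{Mittag12}, regroups by $m=n+k$, and evaluates the resulting finite alternating sum in closed form. The only structural difference is that the paper does not use Abel summation. It first collapses each difference by the Prabhakar recurrence, $F^{(n)}_R(t)-F^{(n+1)}_R(t)=(\lambda t^\alpha)^{\frac bc+n}E^{\frac bc+n+1}_{\alpha,\alpha(\frac bc+n)+1}(-\lambda t^\alpha)$. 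Your version is equivalent; note that $q_n-q_{n-1}=-\frac rc\,(\frac bc)_n/(\frac{b+r}{c})_{n+1}$. Your absolute-convergence justification is a point the paper omits.

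Two things need correcting.

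\textbf{The index mismatch.} No recurrence can reconcile the index discrepancy you spotted, because $(\lambda t^\alpha)^{\frac bc+1}E^{\frac bc+1}_{\alpha,\alpha(\frac bc+1)}(-\lambda t^\alpha)=t\,f_{R_1}(t)$, not $F_{R_1}(t)$. The displayed statement contains a typo. The paper's own derivation (see \eqref{cProbXv1tCompo}) and its $\alpha=1$ remark both give $E^{\frac bc+1}_{\alpha,\alpha(\frac bc+1)+1}$, so the first term is $\frac{b}{b+r}\bar F_{R_1}(t)$.

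\textbf{The target for the inner sum.} Your target is misstated. Write $\beta=b/c$ and $s=(b+r)/c$. After removing $\frac{b}{b+r}\bar F_{R_1}(t)$, the coefficient of $(\lambda t^\alpha)^{\beta+m}/\Gamma(\alpha(\beta+m)+1)$ must be
$\frac{b}{b+r}\cdot\frac{\Gamma(\beta+m+1)}{\Gamma(\beta+1)}\cdot\frac{(-1)^{m-1}}{(m-1)!\,(s+m)}$.
The rational factor is therefore $\Gamma(s+m)/\Gamma(s+m+1)=1/(s+m)$, not $(s+m)/(s+m+1)$. Only $1/(s+m)$ produces the Fox--Wright ratio $\Gamma(s+1+k)/\Gamma(s+2+k)$ at $m=k+1$.

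The sum is also simpler than you anticipate. Since $(\beta)_n/\Gamma(\beta+n)=1/\Gamma(\beta)$, the Pochhammer ratios cancel and everything reduces to
$\sum_{n=1}^{m}\frac{(-1)^{m-n}}{(m-n)!\,\Gamma(s+n+1)}=\frac{(-1)^{m-1}}{(m-1)!\,s\,(s+m)\,\Gamma(s)}$.
This follows from Chu--Vandermonde, ${}_2F_1(-m,1;s+1;1)=s/(s+m)$, after subtracting the $n=0$ term. The paper uses the equivalent partial-sum identity $\sum_{i=0}^{k}(a)_i/i!=(a+1)_k/k!$. No ${}_3F_2$ or Saalsch\"utz step arises.
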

	\begin{proof}
		From \eqref{ccdfRn}, we have
		\begin{align}\label{cRncdfdiff}
			F_{R}^{(n)}(t)-F_{R}^{(n+1)}(t)
			&=(\lambda t^\alpha)^{\frac{b}{c}+n}\Big(E_{\alpha,\alpha\big(\frac{b}{c}+n\big)+1}^{\frac{b}{c}+n}(-\lambda t^\alpha)-\lambda t^\alpha E_{\alpha,\alpha\big(\frac{b}{c}+n+1\big)+1}^{\frac{b}{c}+n+1}(-\lambda t^\alpha)\Big)\nonumber\\
			&=(\lambda t^\alpha)^{\frac{b}{c}+n}E_{\alpha,\alpha\big(\frac{b}{c}+n\big)+1}^{\frac{b}{c}+n+1}(-\lambda t^\alpha),
		\end{align}
		where we have used the following relation (see Gorenflo {\it et al.} (2014), Eq. (5.1.12)):
		\begin{equation*}
			E_{a,ab+1}^{b}(-xy^a)-xy^aE_{a,a(b+1)+1}^{b+1}(-xy^a)=E_{a,ab+1}^{b+1}(-xy^a).
		\end{equation*}
		By using \eqref{cRncdfdiff} in Proposition 1 of Crimaldi {\it et al.} (2013), we get
		\begin{equation}\label{cProbXv1tCompo}
			\mathrm{Pr}\{\mathbb{X}_{\alpha}(t)=v_1 t\}
			=\frac{b}{b+r}\Big(1-(\lambda t^\alpha)^{\frac{b}{c}+1}E_{\alpha,\alpha\big(\frac{b}{c}+1\big)+1}^{\frac{b}{c}+1}(-\lambda t^\alpha)+h(t)\Big),
		\end{equation}
		where
		\begin{align}\label{cXv1tSeries}
			h(t)&=\sum_{n=1}^{\infty}
			\Big(\frac{b+c}{c}\Big)_n\Big(\Big(\frac{b+r+c}{c}\Big)_n\Big)^{-1}(\lambda t^\alpha)^{\frac{b}{c}+n}E_{\alpha,\alpha\big(\frac{b}{c}+n\big)+1}^{\frac{b}{c}+n+1}(-\lambda t^\alpha)\nonumber\\[0.5em]
			&=\sum_{n=1}^{\infty}\sum_{k=0}^{\infty}\Big(\frac{b+c}{c}\Big)_n\Big(\Big(\frac{b+r+c}{c}\Big)_n\Big)^{-1}\frac{(-1)^k(\lambda t^\alpha)^{n+k}\Gamma(\frac{b}{c}+n+1+k)(\lambda t^\alpha)^{\frac{b}{c}}}{k!\Gamma(\frac{b}{c}+n+1)\Gamma(\alpha(\frac{b}{c}+n+k)+1)}\nonumber\\[0.5em]
			&\hspace{10.7cm}(\text{by using \eqref{Mittag12}})\nonumber\\[0.5em]
			&=\frac{(\lambda t^\alpha)^{\frac{b}{c}}}{\Gamma(\frac{b}{c}+1)}\sum_{n=1}^{\infty}\sum_{k=0}^{\infty}\frac{(-1)^k\Gamma\big(\frac{b+r+c}{c}\big)\Gamma\big(\frac{b}{c}+n+k+1\big)(\lambda t^\alpha)^{n+k}}{k!\Gamma\big(\frac{b+r+c}{c}+n\big)\Gamma\big(\alpha(\frac{b}{c}+n+k)+1\big)}\nonumber \\[0.5em]  
			&=\frac{(\lambda t^\alpha)^{\frac{b}{c}}}{\Gamma(\frac{b}{c}+1)}\sum_{m=1}^{\infty}\frac{\Gamma\big(\frac{b+r+c}{c}\big)\Gamma\big(\frac{b+c}{c}+m\big)(\lambda t^\alpha)^m}{\Gamma\big(\alpha\big(\frac{b}{c}+m\big)+1\big)}\sum_{n=1}^{m}\frac{(-1)^{m-n}}{(m-n)!\Gamma\big(\frac{b+r+c}{c}+n\big)}\nonumber\\[0.5em]
			&=\frac{(\lambda t^\alpha)^{\frac{b}{c}}}{\Gamma(\frac{b}{c}+1)}\sum_{m=1}^{\infty}\frac{\Gamma\big(\frac{b+r+c}{c}\big)\Gamma\big(\frac{b+c}{c}+m\big)(\lambda t^\alpha)^m}{\Gamma\big(\alpha\big(\frac{b}{c}+m\big)+1\big)}\sum_{l=0}^{m-1}\frac{(-1)^{l}}{l!\Gamma\big(\frac{b+r+c}{c}+m-l\big)}\nonumber\\[0.5em]
			&=\frac{(\lambda t^\alpha)^{\frac{b}{c}}}{\Gamma(\frac{b}{c}+1)}\sum_{m=1}^{\infty}\frac{\Gamma\big(\frac{b+r+c}{c}\big)\Gamma\big(\frac{b+c}{c}+m\big)(\lambda t^\alpha)^m}{\Gamma\big(\alpha\big(\frac{b}{c}+m\big)+1\big)\Gamma\big(\frac{b+r+c}{c}+m\big)}\sum_{l=0}^{m-1}\frac{\Big(-\Big(\frac{b+r}{c}+m\Big)\Big)_l}{l!}\nonumber\\[0.5em]
			&\hspace{6cm}(\text{by using $(1-x)_{k}\Gamma(x-k)=(-1)^k\Gamma(x)$})\nonumber\\[0.5em]
			&=\frac{(\lambda t^\alpha)^{\frac{b}{c}}}{\Gamma(\frac{b}{c}+1)}\sum_{m=1}^{\infty}\frac{\Gamma\big(\frac{b+r+c}{c}\big)\Gamma\big(\frac{b+c}{c}+m\big)(\lambda t^\alpha)^m\big(1-\big(\frac{b+r}{c}+m\big)\big)_{m-1}}{(m-1)!\Gamma\big(\alpha\big(\frac{b}{c}+m\big)+1\big)\Gamma\big(\frac{b+r+c}{c}+m\big)}\nonumber\\[0.5em]
			&\hspace{8cm}\Big(\text{by using $\sum_{i=0}^{k}\frac{(a)_{i}}{i!}=\frac{(a+1)_{k}}{k!}$}\Big)\nonumber\\[0.5em]
			&=\frac{(\lambda t^\alpha)^{\frac{b}{c}}}{\Gamma\big(\frac{b+c}{c}\big)}\sum_{m=1}^{\infty}\frac{\Gamma\big(\frac{b+c}{c}+m\big)(\lambda t^\alpha)^m(-1)^{m-1}}{(m-1)!\big(\frac{b+r}{c}+m\big)\Gamma\big(\alpha\big(\frac{b}{c}+m\big)+1\big)}\nonumber\\[0.5em]
			&\hspace{6.4cm}(\text{by using $(1-x)_{k}\Gamma(x-k)=(-1)^k\Gamma(x)$})\nonumber\\[0.5em]
			&=\frac{(\lambda t^\alpha)^{\frac{b}{c}+1}}{\Gamma\big(\frac{b+c}{c}\big)}\sum_{m=0}^{\infty}\frac{\Gamma\big(\frac{b}{c}+m+2\big)(-\lambda t^\alpha)^m}{m!\big(\frac{b+r}{c}+m+1\big)\Gamma\big(\alpha\big(\frac{b}{c}+m+1\big)+1\big)}\nonumber\\[0.5em]
			&=\frac{(\lambda t^\alpha)^{\frac{b+c}{c}}}{\Gamma\big(\frac{b+c}{c}\big)}{}_2\Psi_2\!\left[
			\begin{array}{c}
				(\frac{b}{c}+2,1) (\frac{b+r+c}{c},1)\vspace{0.5em}\\ 
				(\frac{b+r+2c}{c},1) (\alpha(\frac{b+c}{c})+1,\alpha)
			\end{array}
			\,\middle|\, -\lambda t^\alpha
			\right],
		\end{align}
		where we have used \eqref{DefFoxWright} in the last step. On substituting \eqref{cXv1tSeries} in \eqref{cProbXv1tCompo}, we obtain \eqref{cDiscProbv1t}. Similarly, \eqref{cDiscProbv2t} can be established.
	\end{proof}
	
	\begin{remark}
		For $\alpha=1$, \eqref{cDiscProbv1t} reduces to
		\begin{align*}
			\mathrm{Pr}\{X(t)=v_1 t\}
			&=\frac{b}{b+r}\Big(1-(\lambda t)^{\frac{b}{c}+1}E_{1,\frac{b}{c}+2}^{\frac{b}{c}+1}(-\lambda t)\Big)\nonumber\\
			&\ \ +\frac{b(\lambda t)^{\frac{b}{c}+1}}{(b+r)\Gamma(\frac{b}{c}+1)}{}_2\Psi_2\!\left[
			\begin{array}{c}
				(\frac{b}{c}+2,1) (\frac{b+r}{c}+1,1)\vspace{0.5em}\\
				(\frac{b+r}{c}+2,1) (\frac{b}{c}+2,1)
			\end{array}
			\,\middle|\, -\lambda t
			\right]\\
			&=\frac{b}{b+r}\bar{F}_{R_1}(t)+\frac{b(\lambda t)^{\frac{b}{c}+1}}{(b+r)\Gamma(\frac{b}{c}+1)}\sum_{k=0}^{\infty}\frac{(-\lambda t)^k}{\big(\frac{b+r+c}{c}+k\big)k!}\\
			&=\frac{b}{b+r}\bar{F}_{R_1}(t)+\frac{bc(\lambda t)^{\frac{b}{c}+1}}{(b+r)\Gamma(\frac{b}{c}+1)(b+r+c)}\sum_{k=0}^{\infty}\frac{\big(\frac{b+r+c}{c}\big)_{k}(-\lambda t)^k}{\big(\frac{b+r+c}{c}+1\big)_{k}k!}\\
			&=\frac{b}{b+r}\bar{F}_{R_1}(t)+\frac{bc(\lambda t)^{\frac{b}{c}+1}e^{-\lambda t}}{(b+r)\Gamma(\frac{b}{c}+1)(b+r+c)}{}_{1}F_{1}(1,\frac{b+r+c}{c}+1;\lambda t)\\
			&=\frac{b}{b+r}\bar{F}_{R_1}(t)+\frac{b(\lambda t)^{\frac{b}{c}}e^{-\lambda t}}{(b+r)\Gamma(\frac{b}{c}+1)}\Big({}_{1}F_{1}\Big(1,\frac{b+r+c}{c};\lambda t\Big)-1\Big),
		\end{align*}
		which agrees with Eq. (35) of Crimaldi {\it et al.} (2013). Here, ${}_{1}F_{1}\big(1,\frac{b+r+c}{c};\lambda t\big)$ is Kummer's function. 
		So, for $\alpha=1$, the discrete component of probability of law of $\mathbb{X}_{\alpha}(t)$ reduces to that of the generalized telegraph process with velocity driven by random trials: {\it P\'olya urn scheme}.
	\end{remark}
	
	For the subsequent results, we use the same notations for the associated statistical quantities for $\{\mathbb{X}_{\alpha}(t)\}_{t\geq0}$ as used for $\{{X}_{\alpha}(t)\}_{t\geq0}$ in the previous section, for example, forward and backward densities, conditional distribution $n$th event time, {\it etc.}
	
	\begin{theorem}
		Conditional on the event that the initial velocity is $v_1$, the absolutely continuous component of probability law of $\{\mathbb{X}_{\alpha}(t)\}_{t\geq0}$  can be expressed in terms of its forward and backward densities as follows:
		{\small\begin{align}\label{cfxt}
			f(x,t|v_1)
			&=\frac{1}{v_1 +v_2}\sum_{n=2}^{\infty}\sum_{j=0}^{n-2}\binom{n-1}{j}\Big(\frac{b+c}{c}\Big)_{j+1}\Big(\frac{r}{c}\Big)_{n-j-1}\Big(\Big(\frac{b+r+c}{c}\Big)_n\Big)^{-1}\frac{(\mu(t-\theta)^\alpha)^{\frac{r}{c}+n-j-1}}{t-\theta}\nonumber\\
			&\hspace{2.7cm} \cdot (\lambda\theta^\alpha)^{\frac{b}{c}+j+1} E_{\alpha,\alpha\big(\frac{r}{c}+n-j-1\big)}^{\frac{r}{c}+n-j-1}(-\mu(t-\theta)^\alpha)E_{\alpha,\alpha\big(\frac{b}{c}+j+1\big)+1}^{\frac{b}{c}+j+2}(-\lambda\theta^\alpha),
		\end{align}}
	and
	{\small\begin{align}\label{cbxt}
		b(x,t|v_1)
		&=\frac{1}{v_1 +v_2}\bigg(\sum_{n=1}^{\infty}\Big(\frac{b+c}{c}\Big)_{n-1}\Big(\Big(\frac{b+r+c}{c}\Big)_n\Big)^{-1}\frac{r(\lambda\theta^\alpha)^{\frac{b}{c}+n}}{c\theta}E_{\alpha,\alpha\big(\frac{b}{c}+n\big)}^{\frac{b}{c}+n}(-\lambda\theta^\alpha)\bar{F}_{L_1}(t-\theta)\nonumber\\
		&\ \ +\sum_{n=2}^{\infty}\sum_{j=0}^{n-2}\binom{n-1}{j}\Big(\frac{b+c}{c}\Big)_{j}\Big(\frac{r}{c}\Big)_{n-j}\Big(\Big(\frac{b+r+c}{c}\Big)_n\Big)^{-1}\frac{(\lambda\theta^\alpha)^{\frac{b}{c}+j+1}}{\theta}(\mu(t-\theta)^\alpha)^{\frac{r}{c}+n-j-1}\nonumber\\
		&\hspace{1.7cm}\cdot  E_{\alpha,\alpha\big(\frac{b}{c}+j+1\big)}^{\frac{b}{c}+j+1}(-\lambda\theta^\alpha) E_{\alpha,\alpha\big(\frac{r}{c}+n-j-1\big)+1}^{\frac{r}{c}+n-j}(-\mu(t-\theta)^\alpha)\bigg),\, -v_2 t< x <v_1t
	\end{align}}
    respectively.
	\end{theorem}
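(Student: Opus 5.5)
The plan follows the proof of Theorem \ref{ThmAbsolutelyCtsProb}. The new ingredients are the P\'olya urn probabilities for the random trials and the generalized Mittag-Leffler convolution densities \eqref{cpdfRn}, \eqref{cpdfLn} and \eqref{ccdfRn}, \eqref{ccdfLn}. I use Eqs. (15) and (16) of Crimaldi {\it et al.} (2013). For $n\ge2$ they express $f_n(x,t|v_1)$ as
\[
\frac{1}{v_1+v_2}\sum_{j}\mathrm{Pr}\{M_{n-1}=j,\,S_n=v_1|S_0=v_1\}\, f_L^{(n-j-1)}(t-\theta)\int_0^{\theta} f_R^{(j+1)}(y)\,\bar F_{R_{j+2}}(\theta-y)\,\mathrm{d}y .
\]
Analogously, $b_n(x,t|v_1)$ is written with $f_R^{(j+1)}(\theta)$ and the integral $\int_0^{t-\theta}f_L^{(n-j-1)}(y)\bar F_{L_{n-j}}(t-\theta-y)\,\mathrm{d}y$. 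The term $b_1$ is separate and involves only $f_{R_1}(\theta)\bar F_{L_1}(t-\theta)$.

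\textbf{Step 1 (the renewal-type integral).} Since $R_{j+2}$ is Mittag-Leffler with $\bar F_{R_{j+2}}=1-F_{R_{j+2}}$, the integral equals
\[
F_R^{(j+1)}(\theta)-\mathrm{Pr}\{R^{(j+2)}\le\theta\}=F_R^{(j+1)}(\theta)-F_R^{(j+2)}(\theta).
\]
By \eqref{cRncdfdiff} this is $(\lambda\theta^\alpha)^{\frac bc+j+1}E^{\frac bc+j+2}_{\alpha,\alpha(\frac bc+j+1)+1}(-\lambda\theta^\alpha)$, which is exactly the last factor in \eqref{cfxt}. The same argument with $L$ in place of $R$ gives the factor $(\mu(t-\theta)^\alpha)^{\frac rc+n-j-1}E^{\frac rc+n-j}_{\alpha,\alpha(\frac rc+n-j-1)+1}$ in \eqref{cbxt}. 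The remaining density factors are read off directly from \eqref{cpdfRn} and \eqref{cpdfLn}. They produce the $1/(t-\theta)$ and $1/\theta$ prefactors, because $t^{\alpha\gamma-1}=(t^\alpha)^\gamma/t$.

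\textbf{Step 2 (urn probabilities).} I need $\mathrm{Pr}\{M_{n-1}=j,S_n=v_1|S_0=v_1\}$ and $\mathrm{Pr}\{M_{n-1}=j,S_n=-v_2|S_0=v_1\}$ for the P\'olya scheme, conditioned on $Y_1=1$. The sequence $Y_2,\dots,Y_{n+1}$ is exchangeable with initial composition $(b+c,r)$. Hence the first probability is
\[
\binom{n-1}{j}\frac{(\frac{b+c}{c})_{j+1}(\frac rc)_{n-j-1}}{(\frac{b+r+c}{c})_n}.
\]
The second is obtained by replacing $(\frac{b+c}{c})_{j+1}(\frac rc)_{n-j-1}$ with $(\frac{b+c}{c})_{j}(\frac rc)_{n-j}$. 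These are Crimaldi {\it et al.}'s formulas for the P\'olya case (their analogues of Eqs. (45)--(46)), and I will verify them directly via the ascending-factorial form of the P\'olya urn law. The counting has to be matched carefully: in the forward case the particle has $j+1$ intervals of positive motion, the current one being interval $j+2$, and $n-j-1$ completed negative intervals.

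\textbf{Step 3 (boundary term and assembly).} The term of \eqref{cbxt} indexed by $j=n-1$ needs separate treatment. Here all $n$ trials $Y_2,\dots,Y_n$ are successes and $S_n=-v_2$, so the particle reverses for the first time at the last event and is currently in its first negative interval. The survival factor is therefore $\bar F_{L_1}(t-\theta)$, with no preceding $L$-convolution. Its probability is $(\frac{b+c}{c})_{n-1}\frac rc/(\frac{b+r+c}{c})_n$, which yields the first sum in \eqref{cbxt}; $b_1$ is its $n=1$ instance. Summing $f_n$ over $n\ge2$ and $b_n$ over $n\ge1$, via \eqref{fxtsum} and \eqref{bxtsum}, then gives the theorem. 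I expect the main obstacle to be Step 2 together with the index bookkeeping. The hard part is matching Crimaldi's Eqs. (15)--(16), which are stated in terms of $M_{n-1}$ and $Y_1$, to the indexing of $R_i$ and $L_i$ here, and making sure the $j=n-1$ backward term is separated correctly from the general sum.
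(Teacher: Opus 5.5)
Your proposal is correct and follows the paper's proof. Like the paper, you substitute the P\'olya urn probabilities \eqref{ProbMn-1,k|v1}--\eqref{ProbMn-1,k-v2|v1} and the generalized Mittag-Leffler densities into Eqs. (15)--(16) of Crimaldi \emph{et al.} (2013), and you isolate the $j=n-1$ backward term carrying $\bar F_{L_1}(t-\theta)$. The only variation is that you evaluate the integral in \eqref{cIntf} as $F_R^{(j+1)}(\theta)-F_R^{(j+2)}(\theta)$ and reuse \eqref{cRncdfdiff}, whereas the paper evaluates it directly as a Prabhakar-type convolution; both give the same factor (minor slip: $Y_2,\dots,Y_n$ are $n-1$ trials, not $n$).
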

	\begin{proof}
		Consider the following integral:
		{\smaller\begin{align}\label{cIntf}
			\int_{t-\theta}^{t}f_{R}^{(j+1)}(s-t+\theta)\bar{F}_{R_{j+2}}(t-s)\mathrm{d}s
			&=\lambda^{\frac{b}{c}+j+1}\int_{0}^{\theta}y^{\alpha\big(\frac{b}{c}+j+1\big)-1}E_{\alpha,\alpha\big(\frac{b}{c}+j+1\big)}^{\frac{b}{c}+j+1}(-\lambda y^{\alpha})E_{\alpha}(-\lambda(\theta-y)^\alpha)\mathrm{d}y\nonumber\\
			&=(\lambda\theta^\alpha)^{\frac{b}{c}+j+1}E_{\alpha,\alpha\big(\frac{b}{c}+j+1\big)+1}^{\frac{b}{c}+j+2}(-\lambda\theta^\alpha).
		\end{align}}
	Similarly,
	{\small\begin{equation}\label{cIntb}
			\int_{\theta}^{t}f_{L}^{(n-k-1)}(s-\theta)\bar{F}_{L_{n-k}}(t-s)\mathrm{d}s
			=(\mu(t-\theta)^\alpha)^{\frac{r}{c}+n-k-1}E_{\alpha,\alpha\big(\frac{r}{c}+n-k-1\big)+1}^{\frac{r}{c}+n-k}(-\mu(t-\theta)^\alpha).
	\end{equation}}
	From Crimaldi {\it et al.} (2013), we have
	{\small\begin{equation}\label{ProbMn-1,k|v1}
		\mathrm{Pr}\{M_{n-1}=k, S_n=v_1|S_0=v_1\}=\binom{n-1}{k}\Big(\frac{b+c}{c}\Big)_{k+1}\Big(\frac{r}{c}\Big)_{n-k-1}\Big(\Big(\frac{b+r+c}{c}\Big)_n\Big)^{-1}
	\end{equation}}
    and
    \begin{equation}\label{ProbMn-1,k-v2|v1}
    	\mathrm{Pr}\{M_{n-1}=k, S_n=-v_2|S_0=v_1\}=\binom{n-1}{k}\Big(\frac{b+c}{c}\Big)_{k}\Big(\frac{r}{c}\Big)_{n-k}\Big(\Big(\frac{b+r+c}{c}\Big)_n\Big)^{-1}.
    \end{equation}
	By using \eqref{cIntf} and \eqref{ProbMn-1,k|v1} in Eq. (15) of Crimaldi {\it et al.} (2013), we obtain
	{\smaller\begin{align}\label{cfnxt}
		f_{n}(x,t|v_1)
		&=\frac{1}{v_1 +v_2}\sum_{j=0}^{n-2}\binom{n-1}{j}\Big(\frac{b+c}{c}\Big)_{j+1}\Big(\frac{r}{c}\Big)_{n-j-1}\Big(\Big(\frac{b+r+c}{c}\Big)_n\Big)^{-1}\mu^{\frac{r}{c}+n-j-1}\nonumber\\
		&\ \ \cdot (t-\theta)^{\alpha\big(\frac{r}{c}+n-j-1\big)-1}E_{\alpha,\alpha\big(\frac{r}{c}+n-j-1\big)}^{\frac{r}{c}+n-j-1}(-\mu(t-\theta)^\alpha)(\lambda\theta^\alpha)^{\frac{b}{c}+j+1}E_{\alpha,\alpha\big(\frac{b}{c}+j+1\big)+1}^{\frac{b}{c}+j+2}(-\lambda\theta^\alpha).
	\end{align}}
	On both sides of \eqref{cfnxt}, we sum for all $n\geq2$ to get \eqref{cfxt}. 
    
    Now, by using \eqref{cIntb} and \eqref{ProbMn-1,k-v2|v1} in Eq. (16) of Crimaldi {\it et al.} (2013), we obtain
    {\small\begin{align}
    	b_{1}(x,t|v_{1})&=\frac{1}{v_1 +v_2}\frac{r(\lambda\theta^\alpha)^{\frac{b}{c}+1}}{(b+r+c)\theta}E_{\alpha,\alpha\big(\frac{b}{c}+1\big)}^{\frac{b}{c}+1}(-\lambda\theta^\alpha)\bar{F}_{L_1}(t-\theta),\label{cb1xt}\\
    	b_{n}(x,t|v_1)&=\frac{1}{v_1 +v_2}\Big(\Big(\frac{b+c}{c}\Big)_{n-1}\Big(\Big(\frac{b+r+c}{c}\Big)_n\Big)^{-1}\frac{r(\lambda\theta^\alpha)^{\frac{b}{c}+n}}{c\theta}E_{\alpha,\alpha\big(\frac{b}{c}+n\big)}^{\frac{b}{c}+n}(-\lambda\theta^\alpha)\bar{F}_{L_1}(t-\theta)\nonumber\\
    	&\ \ +\sum_{j=0}^{n-2}\binom{n-1}{j}\Big(\frac{b+c}{c}\Big)_{j}\Big(\frac{r}{c}\Big)_{n-j}\Big(\Big(\frac{b+r+c}{c}\Big)_n\Big)^{-1}\frac{(\lambda\theta^\alpha)^{\frac{b}{c}+j+1}}{\theta}E_{\alpha,\alpha\big(\frac{b}{c}+j+1\big)}^{\frac{b}{c}+j+1}(-\lambda\theta^\alpha)\nonumber\\
    	&\ \ \cdot(\mu(t-\theta)^\alpha)^{\frac{r}{c}+n-j-1}E_{\alpha,\alpha\big(\frac{r}{c}+n-j-1\big)+1}^{\frac{r}{c}+n-j}(-\mu(t-\theta)^\alpha)\Big),\, n\geq2.\label{cbnxt}
    \end{align}}
    On summing \eqref{cb1xt} and \eqref{cbnxt} for all $n\geq2$, we get \eqref{cbxt}.
    This proves the result.
	\end{proof}
	
	\begin{remark}
		On substituting $\alpha=1$ in \eqref{cfxt}, we obtain
		{\footnotesize\begin{align*}
			f(x,t|v_1)
			&=\frac{1}{v_1 +v_2}\sum_{n=2}^{\infty}\sum_{j=0}^{n-2}\binom{n-1}{j}\Big(\frac{b+c}{c}\Big)_{j+1}\Big(\frac{r}{c}\Big)_{n-j-1}\Big(\Big(\frac{b+r+c}{c}\Big)_n\Big)^{-1}\nonumber\\
			&\ \ \cdot \frac{(\mu(t-\theta))^{\frac{r}{c}+n-j-1}}{t-\theta} E_{1,\frac{r}{c}+n-j-1}^{\frac{r}{c}+n-j-1}(-\mu(t-\theta))(\lambda\theta)^{\frac{b}{c}+j+1}E_{1,\frac{b}{c}+j+2}^{\frac{b}{c}+j+2}(-\lambda\theta),\\
			&=\frac{1}{v_1 +v_2}\sum_{n=2}^{\infty}\sum_{j=0}^{n-2}\binom{n-1}{j}\Big(\frac{b+c}{c}\Big)_{j+1}\Big(\frac{r}{c}\Big)_{n-j-1}\Big(\Big(\frac{b+r+c}{c}\Big)_n\Big)^{-1}\\
			&\ \ \cdot \frac{(\mu(t-\theta))^{\frac{r}{c}+n-j-1}}{t-\theta} \frac{e^{-\lambda\theta-\mu(t-\theta)}}{\Gamma\big(\frac{r}{c}+n-j-1\big)\Gamma\big(\frac{b}{c}+j+2\big)}\\
			&=\frac{e^{-\lambda\theta-\mu(t-\theta)}(\lambda\theta)^{\frac{b}{c}+1}(\mu(t-\theta))^{\frac{r}{c}}}{(v_1 +v_2) (t-\theta)\Gamma\big(\frac{r}{c}\big)\Gamma\big(\frac{b}{c}+1\big)}\sum_{n=2}^{\infty}\Big(\Big(\frac{b+r+c}{c}\Big)_n\Big)^{-1}\sum_{j=0}^{n-2}\binom{n-1}{j}(\lambda\theta)^{j}(\mu(t-\theta))^{n-j-1}
		\end{align*}}
		which agrees with the corresponding result for generalized telegraph process with velocity driven by random trials: {\it P\'olya urn scheme} (see Crimaldi {\it et al.}(2013), p. 1128). On substituting $\alpha=1$ in \eqref{cbxt}, a similar reduction holds for $b(x,t|v_{1})$.
	\end{remark}
	
	\begin{remark}
			Conditional on the event that the initial velocity is $-v_2$, the absolutely continuous component of probability law of $\{\mathbb{X}_{\alpha}(t)\}_{t\geq0}$  can be expressed in terms of its forward and backward densities as follows:
			{\smaller\begin{align*}
					f(x,t|-v_2)
					&=\frac{1}{v_1 +v_2}\bigg( \bar{F}_{R_1}(\theta)\sum_{n=1}^{\infty}\Big(\frac{r+c}{c}\Big)_{n-1}\Big(\Big(\frac{b+r+c}{c}\Big)_n\Big)^{-1}\frac{b(\mu(t-\theta)^\alpha)^{\frac{r}{c}+n}}{c(t-\theta)}\nonumber\\
					&\ \ \cdot E_{\alpha,\alpha\big(\frac{r}{c}+n\big)}^{\frac{r}{c}+n}(-\mu(t-\theta)^\alpha) +\sum_{n=2}^{\infty}\sum_{j=0}^{n-2}\binom{n-1}{j}\Big(\frac{r+c}{c}\Big)_{j}\Big(\frac{b}{c}\Big)_{n-j}\Big(\Big(\frac{b+r+c}{c}\Big)_n\Big)^{-1}\nonumber\\
					&\ \ \cdot \frac{(\mu(t-\theta)^\alpha)^{\frac{r}{c}+j+1}}{t-\theta} (\lambda\theta^\alpha)^{\frac{b}{c}+n-j-1} E_{\alpha,\alpha\big(\frac{r}{c}+j+1\big)}^{\frac{r}{c}+j+1}(-\mu(t-\theta)^\alpha) E_{\alpha,\alpha\big(\frac{b}{c}+n-j-1\big)+1}^{\frac{b}{c}+n-j}(-\lambda\theta^\alpha)\bigg)
			\end{align*}}
		and
		{\smaller\begin{align*}
				b(x,t|-v_2)
				&=\frac{1}{v_1 +v_2}\sum_{n=2}^{\infty}\sum_{j=0}^{n-2}\binom{n-1}{j}\Big(\frac{r+c}{c}\Big)_{j+1}\Big(\frac{b}{c}\Big)_{n-j-1}\Big(\Big(\frac{b+r+c}{c}\Big)_n\Big)^{-1}\frac{(\lambda \theta^\alpha)^{\frac{b}{c}+n-j-1}}{\theta}\nonumber\\
				&\ \ \cdot (\mu(t-\theta)^\alpha)^{\frac{r}{c}+j+1} E_{\alpha,\alpha\big(\frac{b}{c}+n-j-1\big)}^{\frac{b}{c}+n-j-1}(-\lambda\theta^\alpha)E_{\alpha,\alpha\big(\frac{r}{c}+j+1\big)+1}^{\frac{r}{c}+j+2}(-\mu(t-\theta)^\alpha),\, -v_2 t< x <v_1t
		\end{align*}}
			respectively. If 	$\mathrm{Pr}\{\mathbb{V}_{\alpha}(0)=v_1\}=\mathrm{Pr}\{\mathbb{V}_{\alpha}(0)=-v_2\}=0.5$ then
			\begin{equation}\label{pxt1/2Polya}
				p(x,t)=\frac{1}{2}(p(x,t|v_1)+p(x,t|-v_2)).
			\end{equation}	
		\end{remark}

	\begin{figure}
		\centering
		\includegraphics[width=1\textwidth]{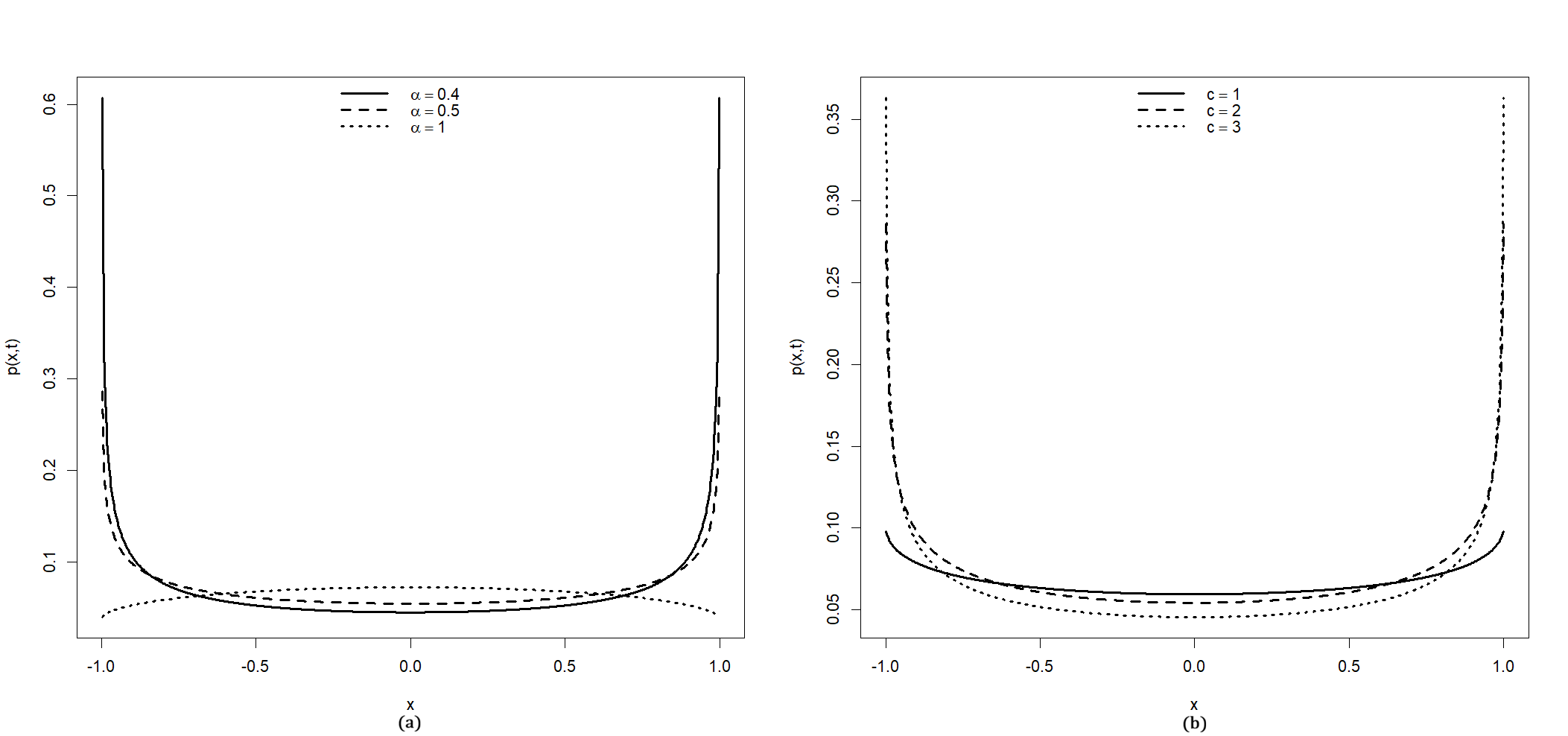}
		\caption{Plots of $p(x,t)$ (a) for different values of $\alpha$ with $t=1$, $v_1=v_2=1$, $\lambda=\mu=1$, $b=r=1$ and $c=2$, (b) for different values of $c$ with $t=1$, $\lambda=\mu=1$, $v_1=v_2=1$, $b=r=1$ and $\alpha=0.5$.}
		\label{fig4} 
	\end{figure}
	\begin{figure}
		\centering
		\includegraphics[width=1\textwidth]{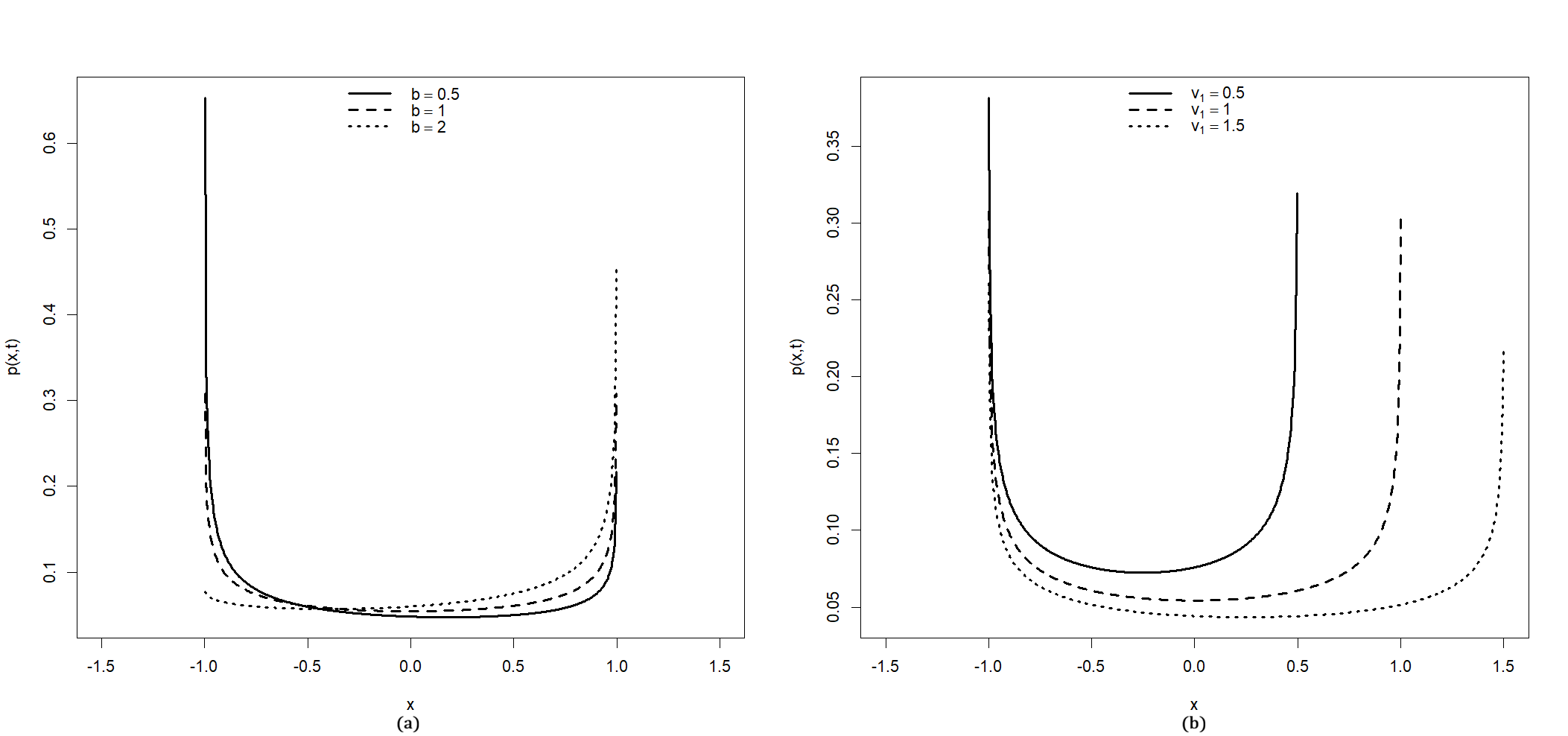}
		\caption{Plots of $p(x,t)$ (a) for different values of $b$ with $t=1$, $v_1=v_2=1$, $\lambda=\mu=1$, $r=1$, $c=2$ and $\alpha=0.5$, (b) for different values of $v_1$ with $t=1$, $\lambda=\mu=1$, $v_2=1$, $b=r=1$, $c=2$ and $\alpha=0.5$.}
		\label{fig5} 
	\end{figure}
	
	In Figure \ref{fig4}(a), the plots of $p(x,t)$ given in \eqref{pxt1/2Polya} are compared for different values of $\alpha$. It is observed that as $\alpha$ decreases, the plot becomes accumulated near boundaries of support of $\mathbb{X}_{\alpha}(t)$. For $\alpha=1$, it shows similar behavior to that observed in Figure 6 of Crimaldi {\it et al.} (2013). In Figure \ref{fig4}(b), the plots of $p(x,t)$ given in \eqref{pxt1/2Polya} are compared for different values of $c$. It is observed that as $c$ increases, plots gets accumulated near boundaries. In Figure \ref{fig5}(a), the plots of $p(x,t)$ given in \eqref{pxt1/2Polya} are compared for different values of $b$. As $b$ increases, the particle is more likely to move with velocity $v_1$ and so the plot becomes more concentrated towards the right. In Figure \ref{fig5}(b), the plots of $p(x,t)$ given in \eqref{pxt1/2Polya} are compared for different values of $v_1$. As $v_1$ increases, the support of $\mathbb{X}_{\alpha}(t)$ increases to the right.
	
	Consider the following convolution:
	\begin{align}\label{PrabhaConvo}
		t^{\beta_1-1}E_{\alpha_1,\beta_1}^{\gamma_1}(at^{\alpha_1})&*t^{\beta_2-1}E_{\alpha_2,\beta_2}^{\gamma_2}(bt^{\alpha_2})\nonumber\\
		&=\int_{0}^{t}s^{\beta_1-1}(t-s)^{\beta_2-1}E_{\alpha_1,\beta_1}^{\gamma_1}(as^{\alpha_1})E_{\alpha_2,\beta_2}^{\gamma_2}(b(t-s)^{\alpha_2})\mathrm{d}s\nonumber\\
		&=\sum_{k=0}^{\infty}\sum_{l=0}^{\infty}\frac{(\gamma_1)_k (\gamma_2)_l a^k b^l}{\Gamma(\alpha_1 k+\beta_1)\Gamma(\alpha_2 l+\beta_2)k!l!}\int_{0}^{t}s^{\alpha_1 k+\beta_1 -1}(t-s)^{\alpha_2 l+\beta_2-1}\mathrm{d}s\nonumber\\
		&=t^{\beta_1 + \beta_2 -1}\sum_{k=0}^{\infty}\sum_{l=0}^{\infty}\frac{(\gamma_1)_k (\gamma_2)_l (at^{\alpha_1})^k (bt^{\alpha_2})^l}{\Gamma(\alpha_1 k+\alpha_2 l+\beta_1+\beta_2)k!l!}\nonumber\\
		&=t^{\beta_1 + \beta_2 -1}E_{(\alpha_1,\alpha_2),\beta_1+\beta_2}^{(\gamma_1,\gamma_2)}(at^{\alpha_1},bt^{\alpha_2}).
	\end{align}
	Here,  $E_{(\alpha_1,\alpha_2),\beta}^{(\gamma_1,\gamma_2)}(\cdot,\cdot)$ is the bivariate generalized Mittag-Leffler function as defined in \eqref{MultGenML}. 

	 \begin{lemma}
	 	Conditional on the event that the initial velocity is $v_1$, the distribution of $n$th event time of counting process $\{\mathcal{N}(t)\}_{t\geq0}$
	 	associated with $\{\mathbb{X}_{\alpha}(t)\}_{t\geq0}$ is given by
	 	\begin{align*}
	 		F_{T_n |S_0}(t|v_1)&=\sum_{k=0}^{n-1}\binom{n-1}{k}\Big(\frac{b+c}{c}\Big)_k \Big(\frac{r}{c}\Big)_{n-k-1}\Big(\Big(\frac{b+r+c}{c}\Big)_{n-1}\Big)^{-1}\nonumber\\
	 		&\hspace{3cm} \cdot(\lambda t^\alpha)^{\frac{b}{c}+k+1}(\mu t^\alpha)^{\frac{r}{c}+n-k-1} E_{(\alpha,\alpha),\alpha\big(\frac{b+r}{c}+n\big)+1}^{(\frac{b}{c}+k+1,\frac{r}{c}+n-k-1)}(-\lambda t^\alpha,-\mu t^\alpha).
	 	\end{align*}
	 \end{lemma}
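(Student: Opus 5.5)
The plan is to start from the decomposition \eqref{ConditionalDistTn}, which is valid for any waiting-time law:
\[
F_{T_n|S_0}(t|v_1)=\sum_{k=0}^{n-1}\mathrm{Pr}\{M_{n-1}=k|S_0=v_1\}\,\mathrm{Pr}\{R^{(k+1)}+L^{(n-k-1)}\leq t\}.
\]
We then supply the two ingredients for the P\'olya urn case.

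\textbf{Urn probability.} Under the P\'olya scheme, given $Y_1=1$ the urn holds $b+c$ balls favouring $v_1$ and $r$ favouring $-v_2$. The next $n-1$ trials are therefore a P\'olya sequence, and $M_{n-1}$ has a beta-binomial law:
\[
\mathrm{Pr}\{M_{n-1}=k|S_0=v_1\}=\binom{n-1}{k}\Big(\frac{b+c}{c}\Big)_k\Big(\frac{r}{c}\Big)_{n-k-1}\Big(\Big(\frac{b+r+c}{c}\Big)_{n-1}\Big)^{-1}.
\]
This is the formula of Crimaldi \emph{et al.} (2013), p.~1132, for $c>0$. It can also be obtained by summing \eqref{ProbMn-1,k|v1} and \eqref{ProbMn-1,k-v2|v1} at level $n-1$ over the value of $S_{n-1}$. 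This accounts for the combinatorial prefactor in the statement.

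\textbf{Distribution of $R^{(k+1)}+L^{(n-k-1)}$.} By Proposition \ref{PropX+YDist}, $R^{(k+1)}$ and $L^{(n-k-1)}$ are generalized Mittag-Leffler with pdfs \eqref{cpdfRn} and \eqref{cpdfLn} at indices $k+1$ and $n-k-1$. The case $k=n-1$, where $L^{(0)}=0$, needs care. Since $F_{R^{(k+1)}+L^{(n-k-1)}}=1*f^{(k+1)}_R*f^{(n-k-1)}_L$, I will write the constant $1$ as $t^{0}/\Gamma(1)$. Taking Laplace transforms gives
\[
z^{-1}\lambda^{\frac{b}{c}+k+1}\mu^{\frac{r}{c}+n-k-1}(z^\alpha+\lambda)^{-(\frac{b}{c}+k+1)}(z^\alpha+\mu)^{-(\frac{r}{c}+n-k-1)}.
\]
Using \eqref{LTMittag}, this equals $\lambda^{\cdot}\mu^{\cdot}$ times the transform of $t^{\beta_1-1}E^{\gamma_1}_{\alpha,\beta_1}(-\lambda t^\alpha)$ multiplied by the transform of $t^{\beta_2-1}E^{\gamma_2}_{\alpha,\beta_2}(-\mu t^\alpha)$, with the parameters chosen as follows:
\begin{itemize}
\item $\gamma_1=\frac bc+k+1$ and $\beta_1=\alpha\gamma_1+1$, so the factor $z^{-1}$ is absorbed into the first term;
\item $\gamma_2=\frac rc+n-k-1$ and $\beta_2=\alpha\gamma_2$.
\end{itemize}

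Equivalently, one can directly convolve the cdf \eqref{ccdfRn} with the pdf \eqref{cpdfLn}. Applying the convolution formula \eqref{PrabhaConvo} then gives
\[
\lambda^{\gamma_1}\mu^{\gamma_2}\,t^{\beta_1+\beta_2-1}E^{(\gamma_1,\gamma_2)}_{(\alpha,\alpha),\beta_1+\beta_2}(-\lambda t^\alpha,-\mu t^\alpha).
\]
Here $\beta_1+\beta_2=\alpha(\frac{b+r}{c}+n)+1$, and $t^{\beta_1+\beta_2-1}=t^{\alpha\gamma_1}t^{\alpha\gamma_2}$. This yields exactly $(\lambda t^\alpha)^{\gamma_1}(\mu t^\alpha)^{\gamma_2}E^{(\gamma_1,\gamma_2)}_{(\alpha,\alpha),\alpha(\frac{b+r}{c}+n)+1}(\cdot,\cdot)$.

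\textbf{Main obstacle.} The step I expect to need most care is justifying the termwise integration in \eqref{PrabhaConvo}. Absolute convergence of the double series, which is entire in each variable, handles this via Fubini.

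The other point is the boundary term $k=n-1$, where $\gamma_2=r/c>0$ but $L^{(0)}$ is degenerate. There the formula should simply read $F^{(n)}_R(t)$. I will interpret $L^{(0)}$ consistently with the convention used in \eqref{ConditionalDistTn}. If the convention instead gives $\gamma_2=r/c$ with $\mu$ present, I will check this against the indexing of Crimaldi \emph{et al.} (2013), Eq.~(18), and adjust $L^{(n-k-1)}$ accordingly.

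Finally, substituting both ingredients into \eqref{ConditionalDistTn} gives the stated formula.
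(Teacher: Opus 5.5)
Your approach is the paper's: the decomposition \eqref{ConditionalDistTn}, the beta-binomial law \eqref{cProbMn-1S0}, and the convolution identity \eqref{PrabhaConvo}. Your cross-check of the urn law from \eqref{ProbMn-1,k|v1} and \eqref{ProbMn-1,k-v2|v1} one level down is fine. Convolving $F_R^{(k+1)}$ with $f_L^{(n-k-1)}$ instead of $F_L^{(n-k-1)}$ with $f_R^{(k+1)}$ changes nothing for $k\leq n-2$.

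\textbf{The gap is the boundary term $k=n-1$.} You flagged it and then deferred it to an unspecified ``convention''. There is no convention to choose, and your closing sentence (that substitution yields the stated formula) is false. Given $S_0=v_1$, the first $n$ intervals contain $n-1-M_{n-1}$ leftward ones. So on $\{M_{n-1}=n-1\}$ we have $L^{(0)}=0$ and $T_n=R^{(n)}$, and the $k=n-1$ term must be $\mathrm{Pr}\{M_{n-1}=n-1\mid S_0=v_1\}F_R^{(n)}(t)$.

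The stated $k=n-1$ term is instead $\mathrm{Pr}\{R^{(n)}+G\leq t\}$, where $G$ is independent and generalized Mittag-Leffler with parameters $\alpha$, $r/c$, $\mu$. This is what one gets by reading \eqref{ccdfLn} at index $0$, outside the range $n\in\mathbb{N}$ for which Proposition \ref{PropX+YDist} produced it.

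Already $n=1$ exhibits the error. Here $T_1=R_1$, so $F_{T_1|S_0}(t|v_1)=F_{R_1}(t)\sim(\lambda t^\alpha)^{\frac{b}{c}+1}/\Gamma(\alpha(\frac{b}{c}+1)+1)$ as $t\to0^+$. The stated right-hand side instead behaves like $(\lambda t^\alpha)^{\frac{b}{c}+1}(\mu t^\alpha)^{\frac{r}{c}}/\Gamma(\alpha(\frac{b+r}{c}+1)+1)$. In general the stated expression falls short of $F_{T_n|S_0}(t|v_1)$ by $\mathrm{Pr}\{M_{n-1}=n-1\mid S_0=v_1\}\,\mathrm{Pr}\{R^{(n)}\leq t<R^{(n)}+G\}>0$, for every $n\geq1$ and $t>0$. ``Adjusting $L^{(n-k-1)}$'' to match it would mean changing $T_n$ itself.

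The paper's proof commits exactly this slip. It inserts \eqref{ccdfLn} for every $0\leq k\leq n-1$, although here $F_L^{(0)}\equiv1$ is not given by that formula. By contrast, in the Bernoulli case \eqref{cdfLn} at index $0$ does reduce to $1$, and in the backward density the paper does isolate the $\bar{F}_{L_1}(t-\theta)$ term. So you should commit to $L^{(0)}=0$ and state the corrected result.

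The paper's order of convolution handles this cleanly. Take $F_L^{(0)}\equiv1=t^{0}E^{0}_{\alpha,1}(-\mu t^\alpha)$, i.e.\ $\gamma_2=0$ and $\beta_2=1$ in \eqref{PrabhaConvo}. Since $(0)_l=0$ for $l\geq1$, the bivariate function collapses to $E^{\frac{b}{c}+n}_{\alpha,\alpha(\frac{b}{c}+n)+1}(-\lambda t^\alpha)$, which gives $F_R^{(n)}(t)$ by \eqref{ccdfRn}. In your order, $f_L^{(0)}$ is a point mass and would need $\beta_2=\alpha\gamma_2=0$, which \eqref{PrabhaConvo} does not cover.

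What your argument actually proves is the displayed formula with $\frac{r}{c}$ replaced by $0$ in the $k=n-1$ term only. That replacement applies in the power of $\mu t^\alpha$, in the second upper parameter, and in the lower parameter, which becomes $\alpha(\frac{b}{c}+n)+1$. The terms with $0\leq k\leq n-2$ are exactly as stated.
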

	 \begin{proof}
	 By using \eqref{cpdfRn} and \eqref{ccdfLn}, we get
	 	\begin{align}\label{cIntTnS0}
	 		\int_{0}^{t}&F_{L}^{(n-k-1)}(t-s)f_{R}^{(k+1)}(s)\mathrm{d}s\nonumber\\
	 		&=\lambda^{\frac{b}{c}+k+1}\mu^{\frac{r}{c}+n-k-1}\int_{0}^{t}(t-s)^{\alpha\big(\frac{r}{c}+n-k-1\big)}E_{\alpha,\alpha\big(\frac{r}{c}+n-k-1\big)+1}^{\frac{r}{c}+n-k-1}(-\mu(t-s)^\alpha)\nonumber\\
	 		&\hspace{6cm} \cdot s^{\alpha\big(\frac{b}{c}+k+1\big)-1} E_{\alpha,\alpha\big(\frac{b}{c}+k+1\big)}^{\frac{b}{c}+k+1}(-\lambda s^\alpha)\mathrm{d}s\nonumber\\
	 		&=(\lambda t^\alpha)^{\frac{b}{c}+k+1}(\mu t^\alpha)^{\frac{r}{c}+n-k-1}E_{(\alpha,\alpha),\alpha\big(\frac{b+r}{c}+n\big)+1}^{(\frac{b}{c}+k+1,\frac{r}{c}+n-k-1)}(-\lambda t^\alpha,-\mu t^\alpha),
	 	\end{align}
	 	where we have used \eqref{PrabhaConvo}. From Eq. (49) of Crimaldi {\it et al.} (2013), we have
	 	\begin{equation}\label{cProbMn-1S0}
	 		\mathrm{Pr}\{M_{n-1}=k|S_0 =v_1\}=\binom{n-1}{k}\Big(\frac{b+c}{c}\Big)_k \Big(\frac{r}{c}\Big)_{n-k-1}\Big(\Big(\frac{b+r+c}{c}\Big)_{n-1}\Big)^{-1}.
	 	\end{equation} 
	 	By using \eqref{cIntTnS0} and \eqref{cProbMn-1S0} in \eqref{ConditionalDistTn}, we get the required result.
	 \end{proof}
	 
	 Next, we derive the conditional expected velocity associated with the telegraph process with generalized Mittag-Leffler waiting times and velocity driven by random trials. 
	 
	 \begin{theorem}
	 	Conditional on the initial velocity $v_1$, the expected velocity associated with $\{\mathbb{X}_{\alpha}(t)\}_{t\geq0}$  is
	 	\begin{equation*}
	 		\mathbb{E}(\mathbb{V}_{\alpha}(t)|\mathbb{V}_{\alpha}(0)=v_1)= v_1 \bar{F}_{R_1}(t)+ \frac{v_1 (b+c)}{b+r+c}\sum_{n=1}^{\infty}\psi_n(t|v_1)- \frac{v_2 r}{b+r+c}\sum_{n=1}^{\infty}\xi_n(t|v_1),
	 	\end{equation*}
	 	where
	 	{\small\begin{equation*}
	 		\psi_n (t|v_1)=\sum_{k=0}^{n-1}\mathrm{Pr}\{M_{n-1}=k|S_0 =v_1\}(\lambda t^\alpha)^{\frac{b}{c}+k+1}(\mu t^\alpha)^{\frac{r}{c}+n-k-1} E_{(\alpha,\alpha),\alpha\big(\frac{b+r}{c}+n\big)+1}^{(\frac{b}{c}+k+2,\frac{r}{c}+n-k-1)}(-\lambda t^\alpha,-\mu t^\alpha)
	 	\end{equation*}}
	 	and
	 	{\small\begin{equation*}
	 			\xi_n (t|v_1)=\sum_{k=0}^{n-1}\mathrm{Pr}\{M_{n-1}=k|S_0 =v_1\}(\lambda t^\alpha)^{\frac{b}{c}+k+1}(\mu t^\alpha)^{\frac{r}{c}+n-k-1} E_{(\alpha,\alpha),\alpha\big(\frac{b+r}{c}+n\big)+1}^{(\frac{r}{c}+n-k,\frac{b}{c}+k+1)}(-\mu t^\alpha,-\lambda t^\alpha).
	 	\end{equation*}}
	 	\begin{proof}
	 		By using \eqref{ccdfLn} and Proposition \ref{PropX+YDist}, we have
	 		\begin{align}
	 			\int_{0}^{t}F_{L}^{(n-k-1)}&(t-s)f_{R^{(k+1)}+R_{n+1}}(s)\mathrm{d}s\nonumber \\
	 			&=\mu^{\frac{r}{c}+n-k-1}\lambda^{\frac{b}{c}+k+2}\int_{0}^{t}(t-s)^{\alpha\big(\frac{r}{c}+n-k-1\big)}E_{\alpha,\alpha\big(\frac{r}{c}+n-k-1\big)+1}^{\frac{r}{c}+n-k-1}(-\mu(t-s)^\alpha)\nonumber\\
	 			&\hspace{6cm} \cdot y^{\alpha\big(\frac{b}{c}+k+2\big)-1}E_{\alpha,\alpha\big(\frac{b}{c}+k+2\big)}^{\frac{b}{c}+k+2}(-\lambda s^\alpha)\mathrm{d}s \label{cIntLRDEf}\\
	 			&=(\lambda t^\alpha)^{\frac{b}{c}+k+2}(\mu t^\alpha)^{\frac{r}{c}+n-k-1} E_{(\alpha,\alpha),\alpha\big(\frac{b+r}{c}+n+1\big)+1}^{(\frac{b}{c}+k+2,\frac{r}{c}+n-k-1)}(-\lambda t^\alpha,-\mu t^\alpha).\label{cIntLRn1}
	 	   \end{align}
	 		By using \eqref{cIntTnS0} and \eqref{cIntLRn1} in Remark 2 of Crimaldi {\it et al.} (2013), we get
	 	{	\small\begin{align*}
	 			\psi_n(t|v_1)
	 			&=\sum_{k=0}^{n-1}\mathrm{Pr}\{M_{n-1}=k|S_0 =v_1\} \int_{0}^{t}F_{L}^{(n-k-1)}(t-s)\big(f_{R}^{(k+1)}(s)-f_{R^{(k+1)}+R_{n+1}}(s)\big)\mathrm{d}s\\
	 			&=\sum_{k=0}^{n-1}\mathrm{Pr}\{M_{n-1}=k|S_0 =v_1\}(\lambda t^\alpha)^{\frac{b}{c}+k+1}(\mu t^\alpha)^{\frac{r}{c}+n-k-1}\nonumber\\
	 			&\hspace{1cm} \cdot  \Big(E_{(\alpha,\alpha),\alpha\big(\frac{b+r}{c}+n\big)+1}^{(\frac{b}{c}+k+1,\frac{r}{c}+n-k-1)}(-\lambda t^\alpha,-\mu t^\alpha)-\lambda t^{\alpha}E_{(\alpha,\alpha),\alpha\big(\frac{b+r}{c}+n+1\big)+1}^{(\frac{b}{c}+k+2,\frac{r}{c}+n-k-1)}(-\lambda t^\alpha,-\mu t^\alpha)\Big)\nonumber\\
	 			&=\sum_{k=0}^{n-1}\mathrm{Pr}\{M_{n-1}=k|S_0 =v_1\}(\lambda t^\alpha)^{\frac{b}{c}+k+1}(\mu t^\alpha)^{\frac{r}{c}+n-k-1} E_{(\alpha,\alpha),\alpha\big(\frac{b+r}{c}+n\big)+1}^{(\frac{b}{c}+k+2,\frac{r}{c}+n-k-1)}(-\lambda t^\alpha,-\mu t^\alpha),
	 		\end{align*}}
	 		where we have used the following relation in the last step:
	 		\begin{equation*}
	 			E_{(a,a),a(b+c)+1}^{(b,c)}(-xz^a,-yz^a)-xz^a E_{(a,a),a(b+c+1)+1}^{(b+1,c)}(-xz^a,-yz^a)=E_{(a,a),a(b+c)+1}^{(b+1,c)}(-xz^a,-yz^a).
	 		\end{equation*}
	 		Similarly, we can obtain $\xi_n(t|v_1)$. This proves the result.
	 	\end{proof}
	 \end{theorem}
	 
	 \begin{remark}
	 	On substituting $\alpha=1$ in \eqref{cIntLRDEf}, we have
	 	\begin{align*}
	 		\int_{0}^{t}&F_{L}^{(n-k-1)}(t-s)f_{R^{(k+1)}+R_{n+1}}(s)\mathrm{d}s\nonumber \\
	 		&=\lambda^{\frac{b}{c}+k+2}\mu^{\frac{r}{c}+n-k-1}\int_{0}^{t}(t-s)^{\frac{r}{c}+n-k-1}s^{\frac{b}{c}+k+1}E_{1,\frac{r}{c}+n-k}^{\frac{r}{c}+n-k-1}(-\mu(t-s))E_{1,\frac{b}{c}+k+2}^{\frac{b}{c}+k+2}(-\lambda s)\mathrm{d}s\nonumber\\
	 		&=\frac{\lambda^{\frac{b}{c}+k+2}\mu^{\frac{r}{c}+n-k-1}}{\Gamma\big(\frac{b}{c}+k+2\big)\Gamma\big(\frac{r}{c}+n-k\big)}\\
	 		&\hspace{3cm} \cdot \int_{0}^{t}(t-s)^{\frac{r}{c}+n-k-1}s^{\frac{b}{c}+k+1}e^{-\mu(t-s)}{}_{1}F_{1}\Big(1,\frac{r}{c}+n-k,\mu(t-s)\Big)e^{-\lambda s}\mathrm{d}s,
	 	\end{align*}
	 	where ${}_{1}F_{1}\big(1,{r}/{c}+n-k,\mu(t-s)\big)$ is the Kummer's function.
	 	So, conditional on the event that the initial velocity of the particle is $v_1$, the expected velocity of particle associated with $\{\mathbb{X}_{\alpha}(t)\}_{t\geq0}$ reduces to that of the generalized telegraph process with velocity driven by random trials: {\it P\'olya urn scheme} (see Appendix B of Crimaldi {\it et al.} (2013)).
	 \end{remark}
	 
	\section*{Acknowledgement}
	The research of first author was supported by a UGC fellowship, NTA reference no. 231610158041, Govt. of India.
	
\end{document}